\DeclareMathOperator*{\f}{\textbf{f}}
\DeclareMathOperator*{\g}{\textbf{g}}
\DeclareMathOperator*{\p}{\mathfrak{p}}
\DeclareMathOperator*{\cp}{\mathcal{P}}
\DeclareMathOperator*{\cn}{\mathcal{N}}
\DeclareMathOperator*{\ch}{\mathcal{H}}
\DeclareMathOperator*{\cl}{\mathcal{L}}
\DeclareMathOperator*{\diff}{\mathfrak{d}}
\DeclareMathOperator*{\diffinv}{\mathfrak{d}^{-1}}
\DeclareMathOperator*{\cond}{\mathfrak{f}}
\DeclareMathOperator*{\q}{\mathfrak{q}}
\DeclareMathOperator*{\qz}{\mathfrak{q}_0}
\DeclareMathOperator*{\td}{\tilde{\mathfrak{d}}}
\DeclareMathOperator*{\tf}{\tilde{\mathfrak{f}}}
\DeclareMathOperator*{\tl}{\tilde{\mathcal{L}}}
\DeclareMathOperator*{\qo}{\mathfrak{q}_1}
\DeclareMathOperator*{\dete}{det}
\DeclareMathOperator*{\sltwo}{SL_2}
\newtheorem{thm}{Theorem}[section]
\newtheorem{theorem}[thm]{Theorem}
\newtheorem{lemma}[thm]{Lemma}	
\newtheorem{proposition}[thm]{Proposition}
\theoremstyle{definition}
\newtheorem{definition}[thm]{Definition}
\theoremstyle{remark}
\newtheorem*{lemma*}{Lemma}
\numberwithin{equation}{section}
\title{Decomposition Theorems for Hilbert Modular Newforms}
\author{Benjamin Linowitz}
\address{Department of Mathematics\\ 
6188 Kemeny Hall\\
Dartmouth College\\
Hanover, NH 03755, USA}
\email[] {benjamin.D.linowitz@dartmouth.edu}
\begin{document}

\begin{abstract} 
Let $\mathscr{S}_k^+(\cn,\Phi)$ denote the space generated by Hilbert modular newforms (over a fixed totally real field $K$) of weight $k$, level $\cn$ and Hecke character $\Phi$. We show how to decompose $\mathscr{S}_k^+(\cn,\Phi)$ into direct sums of twists of other spaces of newforms. This sheds light on the behavior of a newform under a character twist: the exact level of the twist of a newform, when such a twist is itself a newform, and when a newform may be realized as the twist of a primitive newform. These results were proven for elliptic modular forms by Hijikata, Pizer and Shemanske by employing a formula for the trace of the Hecke operator $T_k(n)$. We obtain our results not by employing a more general formula for the trace of Hecke operators on spaces of Hilbert modular forms, but instead by using basic properties of newforms which were proven for elliptic modular forms by Li, and Atkin and Li, and later extended to Hilbert modular forms by Shemanske and Walling.
\end{abstract}

\maketitle

%%%%%%%%%%%%%%%%%%%%%
%%%%%%%%%%%%%%%%%%%%%
%%%%%%%%%%%%%%%%%%%%%
%%%%%%%%%%%%%%%%%%%%%
%%%%%%%%%%%%%%%%%%%%%
%%%%%%%%%%%%%%%%%%%%%
%%%%%%%%%%%%%%%%%%%%%
%%%%%%%%%%%%%%%%%%%%%

\section{Introduction}

In their paper \textit{Twists of Newforms} Hijikata, Pizer and Shemanske \cite{HPS} show how to decompose spaces of elliptic modular newforms into direct sums of character twists of other spaces of newforms. These decompositions provide important information about the behavior of newforms under character twists; for example, the exact level of the twist of a newform, when such a twist is itself a newform, and when a newform may be realized as the twist of a primitive newform. The main technique they used to prove these decompositions is Hijikata's formula  \cite{hijikata} for the trace of the Hecke operator $T_k(n)$ acting on the space of cusp forms $S_k(N, \phi)$ of weight $k$, level $N$ and character $\phi$.

Fix a positive integer $N$. The Hecke algebra spanned by the $T_k(n)$ with $n$ coprime to $N$ acting on $S_k(N,\phi)$ is semi-simple. Showing that two Hecke-modules $A$ and $B$ are isomorphic therefore reduces to showing that the trace of $T_k(n)$ on $A$ equals the trace of $T_k(n)$ on $B$ for all $n$ coprime to $N$. It is in this context that Hijikata's formula is applied. For instance, in Theorem 3.2 they take $A$ to be the space $S^0_k(N,\omega\phi)$ generated by newforms of level $N$ and character $\omega\phi$ and $B$ to be the space $S_k^0(N,\overline{\omega}\phi)^{\omega}$ generated by twists (by $\omega$) of newforms of level $N$ and character $\overline{\omega}\phi$. Here $\omega$ is a Dirichlet character modulo a power of a prime dividing $N$ and $\phi$ is a Dirichlet character whose conductor is coprime to the conductor of $\omega$. Hijikata, Pizer and Shemanske use Hijikata's formula for the trace of $T_k(n)$ to show that $$S^0_k(N,\omega\phi)\cong S_k^0(N,\overline{\omega}\phi)^{\omega}.$$

Hijikata's formulas for the trace of Hecke operators apply in much more general contexts than modular forms on subgroups of $SL_2(\mathbb Z)$. For instance, they apply equally well to spaces of Hilbert modular forms. In theory one could use these more general formulas in order to extend the results of \cite{HPS} to the Hilbert modular setting. However the general formulas are quite complicated, so it is of interest to find a more elementary method of extending the aforementioned results. In this paper we prove several of the results of \cite{HPS} for Hilbert modular forms without appealing to formulas for the traces of Hecke operators. In fact, we use only the basic properties of newforms which were proven for elliptic modular forms in the fundamental papers \cite{li} and \cite{atkin-li} of Li, and Atkin and Li, and later extended to Hilbert modular forms by Shemanske and Walling \cite{shemanske-walling}. Thus the results of this paper are new for Hilbert modular forms over totally real number fields other than $\mathbb Q$, and provide simplified proofs for modular forms over $\mathbb Q$ (the elliptic modular case).

A sample result is the following (see Section \ref{section:prelims} for notation and terminology):

\begin{theorem}
Let $\cn$ be an integral ideal which we decompose as $\cn=\cp \cn_0$ for $\cp$ a power of a prime ideal $\p$ coprime to $\cn_0$. Set $\nu=ord_{\p}\cp$. Let $\phi$ be a numerical character modulo $\cn$ and $\Phi$ be a Hecke character extending $\phi\phi_{\infty}$ which satisfies $\frac{\nu}{2}<e(\Phi_{\cp})=ord_{\p}(\mathfrak{f}_{\Phi_{\cp}})<\nu$. Then $$\mathscr{S}_k^+(\cn,\Phi)=\bigoplus_{e(\Psi)=\nu-e(\Phi_{\cp})} \mathscr{S}_k^+(\mathfrak{p}^{e(\Phi_{\cp})}{\mathcal{N}_0},\Psi^2\Phi )^{\overline{\Psi}},$$ where the sum $\bigoplus_{e(\Psi)=\nu-e(\Phi_{\cp})} $ is taken over all $\p$-primary Hecke characters $\Psi$ with conductor $\p^{\nu-e(\Phi_{\cp})}$ and infinite part $\Psi_{\infty}(a)=\mbox{sgn}(a)^l$ for $l\in\mathbb Z^n$ and $a\in K_{\infty}^\times$.

\end{theorem}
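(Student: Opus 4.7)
The plan is to establish both containments of the claimed equality. For $\supseteq$, I would fix a $\p$-primary Hecke character $\Psi$ of conductor $\p^{\nu-e(\Phi_\cp)}$ and verify that the twist map $\g\mapsto\g^{\overline{\Psi}}$ sends $\mathscr{S}_k^+(\p^{e(\Phi_\cp)}\cn_0,\Psi^2\Phi)$ into $\mathscr{S}_k^+(\cn,\Phi)$. The character check is immediate since $(\Psi^2\Phi)\overline{\Psi}^2=\Phi$. The hypothesis $e(\Phi_\cp)>\nu/2$ gives $e(\Psi^2)\leq\nu-e(\Phi_\cp)<e(\Phi_\cp)$, so the $\p$-conductor of $\Psi^2\Phi$ is exactly $e(\Phi_\cp)$; hence $\g$ sits above a local type at $\p$ whose central character fills the $\p$-level. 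The twist-conductor formulas of Shemanske--Walling then force the $\p$-part of the level of $\g^{\overline{\Psi}}$ to rise from $e(\Phi_\cp)$ to $e(\Phi_\cp)+(\nu-e(\Phi_\cp))=\nu$, and preserve the newform property.

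For the directness of the sum, at a prime $\q$ coprime to $\cn$ the form $\g^{\overline{\Psi}}$ has Hecke eigenvalue $a_\q(\g)\overline{\Psi(\q)}$, so distinct pairs $(\g,\Psi)$ in the indexing set yield distinct eigensystems away from $\cn$. By strong multiplicity one for Hilbert modular newforms, as extended in \cite{shemanske-walling}, the images of different summands are linearly independent in $\mathscr{S}_k^+(\cn,\Phi)$, and combined with the injectivity of each individual twisting map this gives the direct sum.

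The reverse containment is the main point. Given any newform $\f\in\mathscr{S}_k^+(\cn,\Phi)$, the inequality $e(\Phi_\cp)<\nu$ says that the $\p$-conductor of $\Phi$ is strictly less than the $\p$-level of $\f$, i.e., $\f$ is locally over-ramified at $\p$. The Shemanske--Walling untwisting result then produces a $\p$-primary Hecke character $\Psi$ of conductor $\p^{\nu-e(\Phi_\cp)}$ such that $\g:=\f^{\Psi}$ is a newform of level $\p^{e(\Phi_\cp)}\cn_0$ with central character $\Psi^2\Phi$, with the lower bound $e(\Phi_\cp)>\nu/2$ preventing this untwisted level from dropping further. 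Writing $\f=\g^{\overline{\Psi}}$ then realizes $\f$ in the $\Psi$-summand, completing the decomposition.

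The main obstacle is this last step: extracting from the local newform theory the existence of $\Psi$ with exactly the predicted conductor, and pinning down the precise level of $\f^\Psi$. It is exactly here that both halves of the hypothesis $\nu/2<e(\Phi_\cp)<\nu$ are essential, the upper bound ensuring that the $\p$-component of $\f$ is of the type that admits an untwisting and the lower bound ruling out a further drop in the $\p$-level of $\g$ that would place it in the wrong summand.
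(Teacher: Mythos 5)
Your route is the same as the paper's: the containment $\supseteq$ comes from the twist-level and newform-preservation results of Shemanske--Walling (their Theorems 5.7 and 6.3, using that $C(\p,\g)\neq 0$ because $e(\Psi^2\Phi_{\cp})=e(\Phi_{\cp})$ equals the full $\p$-level of $\g$), and the containment $\subseteq$ comes from the untwisting theorem (Theorem 6.8 of \cite{shemanske-walling}) after observing that Theorem \ref{theorem:threethree}(3) forces $C(\p,\f)=0$ for every newform $\f$ of level $\cn$ and character $\Phi$. So far, so good.

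The genuine gap is in your directness argument. From the eigenvalue formula $C(\q,\g_{\overline{\Psi}})=\overline{\Psi}^*(\q)C(\q,\g)$ it does \emph{not} follow that distinct pairs $(\g,\Psi)$ give distinct eigensystems away from $\cn$: if $\Psi_0\neq\Psi_1$, coincidence of the eigensystems of $\f_{\overline{\Psi}_0}$ and $\g_{\overline{\Psi}_1}$ is equivalent to $\f\sim\g_{\overline{\Psi}_1\Psi_0}$, i.e.\ to $\g$ being an inner twist of $\f$ by the character $\overline{\Psi}_1\Psi_0$, and nothing in your argument excludes this. This is precisely where the paper does its real work: assuming $\f_{\overline{\Psi}_0}=\g_{\overline{\Psi}_1}$ with $\Psi_0\neq\Psi_1$, it twists by $\Psi_0$ to get $\f\mid A_{\p}=\g_{\overline{\Psi}_1\Psi_0}$, uses $0<e(\overline{\Psi}_1\Psi_0)<\frac{\nu}{2}<e(\Phi_{\cp})$ together with Corollary 6.4 of \cite{shemanske-walling} to see that $\g_{\overline{\Psi}_1\Psi_0}$ is a normalized newform of the same level and character as $\f$, concludes $\f=\f\mid A_{\p}$, and hence $C(\p,\f)=0$, contradicting Theorem \ref{theorem:threethree}(2). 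You need this (or an equivalent) argument; without it the sum could a priori fail to be direct. (Your remaining steps are fine: once distinctness of eigensystems is established, linear independence follows by the standard argument the paper also uses, applying $T_{\q}-C(\q,\textbf{h}_1)\,\mbox{Id}$ to a minimal dependence relation and invoking Theorem 3.5 of \cite{shemanske-walling}.)
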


%%%%%%%%%%%%%%%%%%%%%
%%%%%%%%%%%%%%%%%%%%%
%%%%%%%%%%%%%%%%%%%%%
%%%%%%%%%%%%%%%%%%%%%
%%%%%%%%%%%%%%%%%%%%%
%%%%%%%%%%%%%%%%%%%%%
%%%%%%%%%%%%%%%%%%%%%
%%%%%%%%%%%%%%%%%%%%%
\section{Notation and Preliminaries}\label{section:prelims}
For the most part we follow the notation of \cite{shemanske-walling, shimura-ann,shimura-duke}. However, to make this paper somewhat self-contained, we shall briefly review the basic definitions of the functions and operators which we shall study.

Let $K$ be a totally real number field of degree $n$ over $\mathbb{Q}$ with ring of integers $\mathcal{O}$, group of units $\mathcal{O}^\times$ and totally positive units $\mathcal{O}^\times_+$. Fix an embedding $a\mapsto (a^{(1)},\cdots,a^{(n)})$ of $K$ into $\mathbb{R}^n$. Let $\diff$ be the different of $K$. If $\q$ is a finite prime of $K$, we denote by $K_{\q}$ the completion of $K$ at $\q$, $\mathcal{O}_{\q}$ the valuation ring of $K_{\q}$, and $\pi_{\q}$ a local uniformizer. 

We denote by $K_A$ the ring of $K$-adeles and by $K_A^\times$ the group of $K$-ideles. As usual we view $K$ as a subgroup of $K_A$ via the diagonal embedding. If $\tilde{\alpha}\in K_A^\times$, we let $\tilde{\alpha}_{\infty}$ denote the archimedean part of $\tilde{\alpha}$ and $\tilde{\alpha}_0$ the finite part of $\tilde{\alpha}$. If $\mathcal{J}$ is an integral ideal we let $\tilde{\alpha}_{\mathcal{J}}$ denote the $\mathcal{J}$-part of $\tilde{\alpha}$. 

For an integral ideal $\cn$ we define a numerical character $\phi$ modulo $\cn$ to be a character $\phi: (\mathcal{O}/\cn)^\times \rightarrow \mathbb{C}^\times$, and a Hecke character to be a continuous character on the idele class group: $\Phi: K_A^\times/ K^\times \rightarrow \mathbb{C}^\times$. We denote the induced character on $K_A^\times$ by $\Phi$ as well. Every Hecke character is of the form $\Phi(\tilde{\alpha})=\prod_{\nu} \Phi_{\nu}(\alpha_{\nu})$ where each $\Phi_\nu$ is a character $\Phi_{\nu}: K_{\nu}^\times \longrightarrow \mathbb{C}^\times$. The conductor, $\mbox{cond}(\Phi)$, of $\Phi$ is defined to be the modulus whose finite part is $\mathfrak{f}_{\Phi}$ (see \cite{heilbronn}) and whose infinite part is the formal product of those archimedean primes $\nu$ for which $\Phi_{\nu}$ is nontrivial. In the case that $\mathfrak{f}_{\Phi}$ is a power of a single prime $\q$, we define the exponential conductor $e(\Phi)$ to be the integer such that $\mathfrak{f}_{\Phi}=\q^{e(\Phi)}$. We adopt the convention that $\phi$ and $\psi$ will always denote numerical characters and $\Phi$ and $\Psi$ will denote Hecke characters.

Let $GL_2^+(K)$ denote the group of invertible matrices with totally positive determinant and $\ch$ the complex upper half-plane. Then $GL_2^+(K)$ acts on $\ch^n$ via fractional linear transformations as follows:

$$\left(
  \begin{array}{ c c }
    a& b \\
      c&d
  \end{array} \right) \mapsto \huge\left[ \tau\rightarrow \left( \cdots, \frac{a^{(\nu)}\tau_\nu+b^{(\nu)}}{c^{(\nu)}\tau_\nu+d^{(\nu)}},\cdots \right) \huge\right]$$
  
Let $k=(k_1,...,k_n)\in \mathbb{Z}_+^n, \tau\in \ch^n$ and set 
$$(c\tau+d)^k=\prod_{\nu=1}^n (c^{(\nu)}\tau_{\nu}+d^{(\nu)})^{k_{\nu}}$$ 
and for $A\in GL_2^+(K)$
$$\dete(A)^k = \prod_{\nu=1}^n (a^{(\nu)}d^{(\nu)}-b^{(\nu)}c^{(\nu)})^{k_\nu}.$$

For $N\in \mathbb{Z}_+$, let $\Gamma_N$ denote the kernel of the reduction map $\sltwo(\mathcal{O})\rightarrow \sltwo(\mathcal{O}/N\mathcal{O}).$

Following Shimura  \cite{shimura-ann, shimura-duke}, we define $M_k(\Gamma_N)$ to be the complex vector space of functions $f$ which are holomorphic on $\ch^n$ and at the cusps of $\Gamma_N$ such that $$f(A\tau)=\dete(A)^{-\frac{k}{2}}(c\tau+d)^kf(\tau)$$
for all $A=\left(
  \begin{array}{ c c }
    a& b \\
      c&d
  \end{array} \right)\in\Gamma_N$. Let $M_k=\bigcup_{N=1}^{\infty} M_k(\Gamma_N)$.
  
  For a fractional ideal $\mathcal{I}$ and integral ideal $\cn$ we set 
  $$\Gamma_0(\cn,\mathcal{I})=\{ A\in \left(
  \begin{array}{ c c }
    \mathcal{O}& \mathcal{I}^{-1}\diffinv \\
      \cn\mathcal{I}\diff & \mathcal{O}
  \end{array} \right) : \dete A \in \mathcal{O}^\times_+ \}.$$

Let $\theta : \mathcal{O}^\times_+\rightarrow \mathbb{C}^\times$ be a character of finite order and note that there exists an element $m\in\mathbb{R}^n$ such that $\theta(a)=a^{im}$ for all totally positive $a$. While such an $m$ is not unique, we shall fix one such $m$ for the remainder of this paper. Let $\phi$ be a numerical character modulo $\cn$ and define $M_k(\Gamma_0(\cn,\mathcal{I}),\phi,\theta)$ to be the set of $f\in M_k$ which satisfy $$f(A\tau)=\dete(A)^{-\frac{k}{2}}\phi(a)\theta(\dete A) (c\tau+d)^k f(\tau)$$ for all $A=\left(
  \begin{array}{ c c }
    a& b \\
      c&d
  \end{array} \right)\in \Gamma_0(\cn,\mathcal{I})$.
  
  Fix a set of strict ideal class representatives $\mathcal{I}_1,...,\mathcal{I}_h$ of $K$, set $\Gamma_{\lambda}=\Gamma_0(\cn,\mathcal{I}_{\lambda})$, and put $$\mathfrak{M}_k(\cn,\phi,\theta)=\prod_{\lambda=1}^h M_k(\Gamma_{\lambda},\phi,\theta).$$ We are interested in studying $h$-tuples $(f_1,...,f_h)\in\mathfrak{M}_k(\cn,\phi,\theta)$.
  
  In order to deal with class number $h>1$ we follow Shimura \cite{shimura-ann, shimura-duke} and describe Hilbert modular forms as functions on an idele group. Let $G_A=GL_2(K_A)$ and view $G_K=GL_2(K)$ as a subgroup of $G_A$ via the diagonal embedding. Denote by $G_{\infty} = GL_2(\mathbb{R})^n$ the archimedean part of $G_A$ and by $G_{\infty +}$ the subgroup of elements having totally positive determinant. For an integral ideal $\cn$ of $\mathcal{O}$ and a prime $\p$, let 
$$Y_{\p}(\cn)=\{ A=\left(
  \begin{array}{ c c }
    a& b \\
      c&d
  \end{array} \right) \in  \left(
  \begin{array}{ c c }
    \mathcal{O}_{\p}& \diffinv\mathcal{O}_{\p} \\
      \cn\diff\mathcal{O}_{\p} & \mathcal{O}_{\p}
  \end{array} \right) : \dete A\in K_{\p}^\times, (a\mathcal{O}_{\p},\cn\mathcal{O}_{\p})=1  \},$$
  
  $$W_{\p}(\cn)=\{ x\in Y_{\p}(\cn) : \dete x\in \mathcal{O}^\times_{\p}  \}$$
  and put $$Y=Y(\cn)=G_A\cap \left(G_{\infty +}\times \prod_{\p} Y_{\p}(\cn)\right),$$
  $$W=W(\cn)=G_{\infty +}\times \prod_{\p} W_{\p}(\cn).$$

Given a numerical character $\phi$ modulo $\cn$ define a homomorphism $\phi_Y: Y\rightarrow \mathbb{C}^\times$ by setting $\phi_Y(\left(
  \begin{array}{ c c }
    \tilde{a}& * \\
      *&*
  \end{array} \right))=\phi(\tilde{a}_{\cn}\mbox{ mod }\cn )$.
  
Given a fractional ideal $\mathcal I$ of $K$ define $\tilde{\mathcal{I}}=(\mathcal{I}_{\nu})_{\nu}$ to be a fixed idele such that $\mathcal{I}_{\infty}=1$ and $\tilde{\mathcal{I}}\mathcal{O}=\mathcal{I}$. For $\lambda=1,...,h,$ set $x_{\lambda}=\left(
  \begin{array}{ c c }
1& 0 \\
      0&\tilde{I}_{\lambda}
  \end{array} \right)\in G_A$. By the Strong Approximation theorem we have $$G_A=\bigcup_{\lambda=1}^h G_K x_{\lambda} W=\bigcup_{\lambda=1}^h G_K x_{\lambda}^{-\iota} W$$ where $\iota$ denotes the canonical involution on two-by-two matrices.
  
  For an $h$-tuple $(f_1,...,f_h)\in\mathfrak{M}_k(\cn,\phi,\theta)$ we define a function $\f: G_A\rightarrow \mathbb{C}$ by
  
  $$\f(\alpha x_{\lambda}^{-\iota}w)=\phi_Y(w^{\iota})\dete(w_{\infty})^{im}(f_{\lambda}\mid w_{\infty})(\textbf{i})$$
for $\alpha\in G_K$, $w\in W(\cn)$ and $\textbf{i}=(i,...,i)$ (with $i=\sqrt{-1}$). Here $$f_{\lambda}\mid \left(
  \begin{array}{ c c }
a& b \\
      c&d
  \end{array} \right)(\tau)=(ad-bc)^{\frac{k}{2}}(c\tau+d)^{-k} f_{\lambda}\left(\frac{a\tau+b}{c\tau+d}\right).$$

As in \cite{shimura-ann, shimura-duke}, we identify $\mathfrak{M}_k(\cn,\phi,\theta)$ with the set of functions $\f: G_A\rightarrow \mathbb{C}$ satisfying
\begin{enumerate}
\item $\f(\alpha x w)=\phi_Y(w^{\iota})\f(x)$ for all $\alpha\in G_K, x\in G_A, w\in W(\cn), w_{\infty}=1$
\item For each $\lambda$ there exists an element $f_{\lambda}\in M_k$ such that $$\f(x_{\lambda}^{-\iota}y)=\dete(y)^{im}(f_{\lambda}\mid y)(\textbf{i})$$ for all $y\in G_{\infty +}$.
\end{enumerate}  
 
Let $\phi_{\infty}: K_A^\times\rightarrow \mathbb{C}^\times$ be defined by $\phi_{\infty}(\tilde{a})=\mbox{sgn}(\tilde{a}_{\infty})^k|\tilde{a}_{\infty}|^{2im}$, where $m$ was defined in the definition of $\theta$. We say that a Hecke character $\Phi$ extends $\phi\phi_{\infty}$ if $\Phi(\tilde{a})=\phi(\tilde{a}_{\cn}\mbox{ mod }\cn)\phi_{\infty}(\tilde{a})$ for all $\tilde{a}\in K_{\infty}^\times \times \prod_{\p} \mathcal{O}_{\p}^\times$. If $\mathfrak{P}_{\infty}$ denotes the $K$-modulus consisting of the product of all the infinite primes of $K$, then any Hecke character $\Phi$ extending $\phi\phi_{\infty}$ has conductor dividing $\cn\mathfrak{P}_{\infty}$. Henceforth we will use the word conductor to refer to the finite part of the conductor. 

If $\phi$ is a numerical character modulo $\cp\cn_0$ where $\cp=\p^a$ is a power of a prime $\p$ and $(\p,\cn_0)=1$, then by the Chinese Remainder Theorem we have a decomposition $\phi=\phi_{\cp}\phi_{\cn_0}$ where $\phi_{\cp}$ is a numerical character modulo $\cp$ and $\phi_{\cn_0}$ is a numerical character modulo $\cn_0$. If $\Phi_{\cp}$ is a Hecke character extending $\phi_{\cp}$ (i.e. trivial infinite part) and $\Phi_{\cn_0}$ is a Hecke character extending $\phi_{\cn_0}\phi_{\infty}$ then it is clear that $\Phi=\Phi_{\cp}\Phi_{\cn_0}$. Throughout this paper we shall adopt this convention and decompose Hecke characters $\Phi$ extending numerical characters modulo $\cp\cn_0$ as $\Phi=\Phi_{\cp}\Phi_{\cn_0}$ where $\Phi_{\cp}$ has trivial infinite part.

Given a Hecke character $\Phi$ extending $\phi\phi_{\infty}$ we define an ideal character $\Phi^*$ modulo $\cn\mathfrak{P}_{\infty}$ by 
\begin{displaymath}
\left\{ \begin{array}{ll}
\Phi^*(\p)=\Phi(\tilde{\pi}_{\p}) & \textrm{for $\p\nmid \cn$ and $\tilde{\pi}\mathcal{O}=\p,$}\\
\Phi^*(\mathfrak{a})=0 & \textrm{if $(\mathfrak{a},\cn)\neq 1$ }\\
\end{array} \right.
\end{displaymath}

Observe that for any $\tilde{a}\in K_A^\times$ with $(\tilde{a}\mathcal{O},\cn)=1$, $\Phi(\tilde{a})=\Phi^*(\tilde{a}\mathcal{O})\phi(\tilde{a}_{\cn})\phi_{\infty}(\tilde{a})$.

For $\tilde{s}\in K_A^\times$, define $\f^{\tilde{s}}(x)=\f(\tilde{s}x)$. The map $\tilde{s}\longrightarrow \left( \f\mapsto \f^{\tilde{s}}\right)$ defines a unitary representation of $K_A^\times$ in $\mathfrak{M}_k(\cn,\phi,\theta)$. By Schur's Lemma the irreducible subrepresentations are all one-dimensional (since $K_A^\times$ is abelian). For a character $\Phi$ on $K_A^\times$, let $\mathscr{M}_k(\cn,\Phi)$ denote the subspace of $\mathfrak{M}_k(\cn,\phi,\theta)$ consisting of all $\f$ for which $\f^{\tilde{s}}=\Phi(\tilde{s})\f$ and let $\mathscr{S}_k(\cn,\Phi)\subset \mathscr{M}_k(\cn,\Phi)$ denote the subspace of cusp forms. If $s\in K^\times$ then $\f^{s}=\f$. It follows that $\mathscr{M}_k(\cn,\Phi)$ is nonempty only when $\Phi$ is a Hecke character.
 
If $\f=(f_1,...,f_h)\in \mathfrak{M}_k(\cn,\phi,\theta)$, then each $f_{\lambda}$ has a Fourier expansion
$$f_{\lambda}(\tau)=a_{\lambda}(0)+\sum_{0\ll \xi\in\mathcal{I}_{\lambda}} a_{\lambda}(\xi) e^{2\pi i \mbox{tr} (\xi\tau)}.$$

If $\mathfrak{m}$ is an integral ideal then following Shimura we define the $\mathfrak{m}$-th `Fourier' coefficient of $\f$ by 
\begin{displaymath}
C(\mathfrak{m},\f)=\left\{ \begin{array}{ll}
N(\mathfrak{m})^{\frac{k_0}{2}}a_{\lambda}(\xi)\xi^{-\frac{k}{2}-im}& \textrm{if $\mathfrak{m}=\xi\mathcal{I}_{\lambda}^{-1}\subset\mathcal{O}$}\\
0 & \textrm{otherwise}\\
\end{array} \right.
\end{displaymath}
where $k_0=\mbox{max}\{k_1,...,k_n\}$.

Given $\f\in\mathfrak{M}_k(\cn,\phi,\theta)$ and $y\in G_A$ define a slash operator by setting $(\f\mid y)(x)=\f(xy^{\iota})$. 

For an integral ideal $\mathfrak{r}$ define the shift operator $B_{\mathfrak{r}}$ by $$\f\mid B_{\mathfrak{r}}=N(\mathfrak{r})^{-\frac{k_0}{2}} \f\mid \left(
  \begin{array}{ c c }
1& 0 \\
      0&\tilde{\mathfrak{r}}^{-1}
  \end{array} \right).$$
  The shift operator maps $\mathscr{M}_k(\cn,\Phi)$ to $\mathscr{M}_k(\mathfrak{r}\cn,\Phi)$ and takes cusp forms to cusp forms. Further, $C(\mathfrak{m},\f\mid B_{\mathfrak{r}})=C(\mathfrak{m}\mathfrak{r}^{-1},\f)$. It is clear that $\f \mid B_{\mathfrak{r}_1}\mid B_{\mathfrak{r}_2}=\f\mid B_{\mathfrak{r}_1\mathfrak{r}_2}$.
  
  For an integral ideal $\mathfrak{r}$ the Hecke operator $T_{\mathfrak{r}}=T_{\mathfrak{r}}^{\cn}$ maps  $\mathscr{M}_k(\cn,\Phi)$ to itself regardless of whether or not $(\mathfrak{r},\cn)=1$. This action is given on Fourier coefficients by $$C(\mathfrak{m},\f\mid T_{\mathfrak{r}})=\sum_{\mathfrak{m}+\mathfrak{r}\subset\mathfrak{a}} \Phi^*(\mathfrak{a})N(\mathfrak{a})^{k_0-1}C(\mathfrak{a}^{-2}\mathfrak{m}\mathfrak{r},\f).$$ Like the shift operator, $T_{\mathfrak{r}}$ takes cusp forms to cusp forms. Also note that if $(\mathfrak{a},\mathfrak{r})=1$ then $B_{\mathfrak{a}}T_{\mathfrak{r}}=T_{\mathfrak{r}}B_{\mathfrak{a}}$. Given $\f\in  \mathscr{S}_k(\cn,\Phi)$ we define the annihilator operator $A_{\p}$ by $$\f\mid A_{\p} = \f-\f\mid T_{\p}\mid B_{\p}.$$

Let $\mathscr{S}_k^-(\cn,\Phi)$ be the subspace of $\mathscr{S}_k(\cn,\Phi)$ generated by all $\g\mid B_{\mathcal{Q}}$ where $\g\in \mathscr{S}_k(\cn^\prime,\Phi)$ for some proper divisor $\cn^\prime$ of $\cn$ with $\mathcal{Q}\cn^\prime\mid \cn$. This space is invariant under the action of the Hecke operators $T_\mathfrak{r}$ with $(\mathfrak{r},\cn)=1$. 

Shimura defines ((2.28) of \cite{shimura-duke}) a Petersson inner product $\langle \f,\g\rangle$ for $\f,\g\in\mathscr{S}_k(\cn,\Phi)$. With respect to this inner product the Hecke operators satisfy $$\Phi^*(\mathfrak{m})\langle\f\mid T_{\mathfrak{m}},\g\rangle=\langle \f,\g\mid T_{\mathfrak{m}}\rangle$$ for integral ideals $\mathfrak{m}$ coprime to $\cn$. Let $\mathscr{S}_k^+(\cn,\Phi)$ denote the orthogonal complement of $\mathscr{S}_k^-(\cn,\Phi)$ in $\mathscr{S}_k(\cn,\Phi)$. It follows from our discussion above that $\mathscr{S}_k^+(\cn,\Phi)$ is invariant under the Hecke operators $T_{\mathfrak{r}}$ with $(\mathfrak{r},\cn)=1$.

\begin{definition}A newform $\f$ in $\mathscr{S}_k(\cn,\Phi)$ is a form in $\mathscr{S}_k^+(\cn,\Phi)$ which is a simultaneous eigenform for all Hecke operators $T_{\q}$ with $\q$ a prime not dividing $\cn$. We say that $\f$ is normalized if $C(\mathcal{O},\f)=1$.\end{definition}

As in the classical case, if $\f\in  \mathscr{S}_k(\cn,\Phi)$ is a newform with Hecke eigenvalues $\{\lambda_{\p} : \p \mbox{is prime} \},$ then $C(\p,\f)=\lambda_{\p}C(\mathcal{O},\f)$ for all primes $\p\nmid\cn$.

Since $\{ T_{\q} : \q\nmid \cn\}$ is  commuting family of hermitian operators,  $\mathscr{S}_k^+(\cn,\Phi)$ has an orthogonal basis consisting of newforms. If $\g\in \mathscr{S}_k^-(\cn,\Phi)$ is a simultaneous eigenform for all $T_{\q}$ with $\q\nmid \cn$ then there exists a newform $\textbf{h}\in  \mathscr{S}_k^+(\cn^\prime,\Phi)$ with $\cn^\prime\mid \cn$ having the same eigenvalues as $\g$ for all such $T_{\q}$.
  
Finally, if $\f, \g\in \mathscr{S}_k(\cn,\Phi)$ are both simultaneous eigenforms for all Hecke operators $T_{\q}$ with $\q$ a prime not dividing $\cn$ having the same Hecke eigenvalues, then we say that $\f$ is equivalent to $\g$ and write $\f \sim \g$. If $\f$ is a newform and $\f\sim \g$, then there exists $c\in \mathbb{C}^\times$ such that $\f=c\g$. This follows from Theorem 3.5 of \cite{shemanske-walling}.
%%%%%%%%%%%%%%%%%%%%%
%%%%%%%%%%%%%%%%%%%%%
%%%%%%%%%%%%%%%%%%%%%
%%%%%%%%%%%%%%%%%%%%%
%%%%%%%%%%%%%%%%%%%%%
%%%%%%%%%%%%%%%%%%%%%
%%%%%%%%%%%%%%%%%%%%%
%%%%%%%%%%%%%%%%%%%%%

\section{Twists of Newforms}

Throughout this section $\p$ will denote a fixed prime ideal of $\mathcal{O}$.

Fix an integral ideal $\cn$ and write $\cn=\cp{\mathcal{N}_0}$ where $\cp$ is the $\p$-primary part of $\cn$ and $(\cp,{\mathcal{N}_0})=1$.

Fix a space $\mathscr{S}_k(\cn,\Phi)\subset \mathfrak{S}_k(\cn,\phi, m)$, where $\Phi$ is a Hecke character extending $\phi\phi_{\infty}$.

\begin{definition} If $\f\in\mathscr{S}_k(\cn,\Phi)$ is a normalized newform and $\Psi$ is a Hecke character then we define the twist of $\f$ by $\Psi$, denoted $\f_\Psi$, by 

$$\textbf{f}_\Psi(x)=\tau(\overline{\Psi})^{-1}\Psi(\dete x) \sum_{r\in \mathfrak{f}_{\Psi}^{-1}\mathfrak{d}^{-1}/\mathfrak{d}^{-1}} \overline{\Psi}_{\infty}(r)\overline{\Psi}^*(r\mathfrak{f}_{\Psi}\diff)\f\mid \left(
  \begin{smallmatrix}
    1& r \\
      0& 1
  \end{smallmatrix} \right)_0 (x),
$$ where $\tau(\overline{\Psi})$ is the Gauss sum associated to $\overline{\Psi}$ defined in (9.31) of \cite{shimura-ann} and the subscript $0$ denotes the projection onto the nonarchimedean part.
\end{definition}

\begin{proposition}\label{proposition:crudebound}
Let notation be as above and set $\mathcal{L}=lcm\{\cn,\cond_{\Phi}\mathfrak{f}_{\Psi},\mathfrak{f}_{\Psi}^2\}$. If $\f\in\mathscr{S}_k(\cn,\Phi)$ is a normalized newform then $\f_{\Psi}\in \mathscr{S}_k(\mathcal{L},\Psi^2\Phi)$ and $C(\mathfrak{m},\f_{\Psi})=\Psi^*(\mathfrak{m})C(\mathfrak{m},\f)$ for all integral ideals $\mathfrak{m}$.
\end{proposition}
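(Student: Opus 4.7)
The plan is to establish the proposition in two largely independent steps: first compute the Fourier coefficients of $\f_\Psi$ by unraveling the definition, and then verify that $\f_\Psi$ satisfies the automorphy properties defining $\mathscr{S}_k(\mathcal{L},\Psi^2\Phi)$. Cuspidality is automatic because $\f_\Psi$ is, up to the scalar factor $\Psi(\det x)$, a finite $\mathbb{C}$-linear combination of right-translates of the cusp form $\f$, which preserves the cusp condition.

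For the Fourier coefficient identity, I will pull the twist formula back to the archimedean realization. The matrix $\left(\begin{smallmatrix}1 & r\\0 & 1\end{smallmatrix}\right)_0$ acts on the $\lambda$-th component as translation $\tau \mapsto \tau + r$, so after evaluating $\f_\Psi$ at $x_\lambda^{-\iota}y$ with $y\in G_{\infty+}$ and isolating the $\lambda$-th Fourier expansion, I can interchange the sum over $r\in\mathfrak{f}_\Psi^{-1}\diffinv/\diffinv$ with the Fourier sum over $\xi$. The inner sum becomes
\[
\sum_{r} \overline{\Psi}_\infty(r)\overline{\Psi}^*(r\mathfrak{f}_\Psi\diff)\, e^{2\pi i\,\mathrm{tr}(\xi r)},
\]
which by the Gauss-sum identity (9.31) of \cite{shimura-ann} collapses to $\tau(\overline{\Psi})\Psi^*(\xi\mathcal{I}_\lambda^{-1})$ times a $\lambda$-dependent constant that cancels the $\Psi(\det x_\lambda^{-\iota})$ factor from the prefactor. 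Translating back through Shimura's normalization $C(\mathfrak{m},\f)=N(\mathfrak{m})^{k_0/2}a_\lambda(\xi)\xi^{-k/2-im}$ gives the stated identity $C(\mathfrak{m},\f_\Psi)=\Psi^*(\mathfrak{m})C(\mathfrak{m},\f)$; coefficients supported on ideals not of the form $\xi\mathcal{I}_\lambda^{-1}$ remain zero on both sides.

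For the automorphy, I will check the two defining properties of $\mathscr{M}_k(\mathcal{L},\Psi^2\Phi)$. The idele action is easy: since $\Psi(\det(\tilde{s}x)) = \Psi(\tilde{s})^2\Psi(\det x)$ and $\f^{\tilde{s}} = \Phi(\tilde{s})\f$, the prefactor contributes $\Psi^2(\tilde{s})$ and $\f$ contributes $\Phi(\tilde{s})$, yielding $\f_\Psi^{\tilde{s}} = (\Psi^2\Phi)(\tilde{s})\f_\Psi$. For the right-invariance under $W(\mathcal{L})$ with $w_\infty=1$, I write $w = w_\infty\cdot w_0$ and conjugate: $\left(\begin{smallmatrix}1 & r\\0 & 1\end{smallmatrix}\right)_0 w = \bigl(\left(\begin{smallmatrix}1 & r\\0 & 1\end{smallmatrix}\right)_0 w \left(\begin{smallmatrix}1 & -r\\0 & 1\end{smallmatrix}\right)_0\bigr)\left(\begin{smallmatrix}1 & r\\0 & 1\end{smallmatrix}\right)_0$, so that $\f$ only sees the conjugate $w' := \left(\begin{smallmatrix}1 & r\\0 & 1\end{smallmatrix}\right)_0 w \left(\begin{smallmatrix}1 & -r\\0 & 1\end{smallmatrix}\right)_0$. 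A direct $2\times 2$ calculation at each prime $\q\mid\cn$ shows that for $r\in \mathfrak{f}_\Psi^{-1}\diffinv$, the conjugate $w'$ lies in $W(\cn)$ precisely because $\mathcal{L}$ is divisible by both $\mathfrak{f}_\Psi^2$ (to control the lower-left entry after conjugation) and by $\cond_\Phi\mathfrak{f}_\Psi$ (to ensure $\phi_Y(w'^{\,\iota})$ only sees the upper-left entry modulo $\cn$). Finally, $\phi_Y(w'^\iota)\Psi(\det w) = (\Psi^2\phi)_Y(w^\iota)$ after reindexing the sum over $r$ by $r\mapsto ar$ using the Hecke-character expansion of $\Psi(\det w)$ on the $\p$-part; this is where the choice of $\mathcal{L}$ is exploited.

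The main obstacle is precisely the local bookkeeping in this last conjugation step: verifying that $\mathcal{L}=\mathrm{lcm}\{\cn,\cond_\Phi\mathfrak{f}_\Psi,\mathfrak{f}_\Psi^2\}$ is the sharp divisibility condition making $w'\in W(\cn)$ and making the character calculations align. The Fourier-coefficient calculation and the archimedean weight transformation are routine once the Gauss-sum identity and Shimura's normalization conventions are carefully applied.
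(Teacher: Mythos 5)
The paper itself does not prove this proposition: its ``proof'' is a one-line citation to Proposition 4.5 of \cite{shimura-duke}. Your argument is therefore necessarily a different route on paper, but in substance it is a reconstruction of Shimura's proof, and its hardest step is exactly the computation the author does write out in full for the special case $\Psi=\overline{\Phi}_{\mathcal{P}}$ in Proposition \ref{proposition:levelbound}. The cuspidality remark, the idele-action computation giving the character $\Psi^2\Phi$, and the Gauss-sum evaluation of the Fourier coefficients are all sound in outline.

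There is, however, a concrete error in your conjugation step. You move the unipotent past $w$ by writing $w' = \left(\begin{smallmatrix}1&r\\0&1\end{smallmatrix}\right)_0 w \left(\begin{smallmatrix}1&-r\\0&1\end{smallmatrix}\right)_0$ with the \emph{same} $r$ on both sides and claim $w'\in W(\cn)$. Writing $w=\left(\begin{smallmatrix}\tilde a & \td^{-1}\tilde b\\ \tilde c\tl\td & \tilde d\end{smallmatrix}\right)$, the upper-right entry of this conjugate is $\td^{-1}\tilde b + r(\tilde d-\tilde a) - r^2\tilde c\tl\td$. The term $r^2\tilde c\tl\td$ is controlled by $\mathfrak{f}_{\Psi}^2\mid\mathcal{L}$ as you say, but the term $r(\tilde d-\tilde a)$ lies only in $\mathfrak{f}_{\Psi}^{-1}\diffinv\mathcal{O}_{\p}$, not in $\diffinv\mathcal{O}_{\p}$, unless $\tilde a\equiv\tilde d\pmod{\mathfrak{f}_{\Psi}}$ --- which is false for general $w$. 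So $w'\notin W(\cn)$ and the invariance does not follow as stated. The correct maneuver (Shimura's, and the one carried out in Proposition \ref{proposition:levelbound}) is to solve $w\left(\begin{smallmatrix}1&-r\\0&1\end{smallmatrix}\right)_0 = \left(\begin{smallmatrix}1&-r'\\0&1\end{smallmatrix}\right)_0 w'$ for a \emph{different} $r'$, determined locally by $\tilde a r - r'(\tilde d - \tilde c\tl\td r)\in\diffinv\mathcal{O}_{\p}$ (so $r'$ is roughly $ar/d$ at $\p$), then prove that $r\mapsto r'$ is a bijection of $\mathfrak{f}_{\Psi}^{-1}\diffinv/\diffinv$ so the sum can be reindexed, and finally track how $\overline{\Psi}^*(r\mathfrak{f}_{\Psi}\diff)$ differs from $\overline{\Psi}^*(r'\mathfrak{f}_{\Psi}\diff)$ by a factor in $a_{\p},d_{\p}$ that recombines with $\Psi(\dete w)$ to produce $(\Psi^2\phi)_Y(w^\iota)$. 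Your closing phrase ``after reindexing the sum over $r$ by $r\mapsto ar$'' gestures at this, but the reindexing is not an afterthought to the conjugation --- it \emph{is} the conjugation, and the bijectivity of $r\mapsto r'$ modulo $\diffinv$ is the substantive verification (about a page in the paper's Proposition \ref{proposition:levelbound}) that your sketch omits.
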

\begin{proof}
This is Proposition 4.5 of \cite{shimura-duke}.
\end{proof}

The following proposition is trivial to verify using the action of the Hecke operators on Fourier coefficients.

\begin{proposition}\label{proposition:eigentwist}
Let notation be as above and $\mathfrak q$ be a prime with $\mathfrak{q}\nmid \mathfrak{f}_{\Psi}$. For $\f\in \mathscr{S}_k(\mathcal{N},\Phi)$ we have $\f_{\Psi}\mid T_{\mathfrak{q}} = \Psi^*(\mathfrak q) (\f\mid T_{\mathfrak q})_{\Psi}$.
\end{proposition}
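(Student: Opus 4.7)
The plan is to verify the identity at the level of Fourier coefficients, since a form in $\mathscr{S}_k$ is determined by its coefficients $C(\mathfrak{m},\cdot)$. Thus it suffices to show $C(\mathfrak{m},\f_\Psi\mid T_\q) = \Psi^*(\q)\,C(\mathfrak{m},(\f\mid T_\q)_\Psi)$ for every integral ideal $\mathfrak{m}$.

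First I would expand the left-hand side using the Hecke formula on Fourier coefficients. Since Proposition \ref{proposition:crudebound} places $\f_\Psi$ in $\mathscr{S}_k(\mathcal{L},\Psi^2\Phi)$, the relevant character is $\Psi^2\Phi$, so
$$C(\mathfrak{m},\f_\Psi\mid T_\q)=\sum_{\mathfrak{m}+\q\subset\mathfrak{a}} (\Psi^*)^2(\mathfrak{a})\,\Phi^*(\mathfrak{a})\,N(\mathfrak{a})^{k_0-1}\,C(\mathfrak{a}^{-2}\mathfrak{m}\q,\f_\Psi).$$
Then I would apply Proposition \ref{proposition:crudebound} a second time to rewrite each $C(\mathfrak{a}^{-2}\mathfrak{m}\q,\f_\Psi)=\Psi^*(\mathfrak{a}^{-2}\mathfrak{m}\q)\,C(\mathfrak{a}^{-2}\mathfrak{m}\q,\f)$.

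The crucial observation is that $\mathfrak{a}$ runs over divisors of $\q$, so $\mathfrak{a}\in\{\mathcal{O},\q\}$; by hypothesis $\q\nmid\mathfrak{f}_\Psi$, so every such $\mathfrak{a}$ is coprime to $\mathfrak{f}_\Psi$. Complete multiplicativity of $\Psi^*$ (with the convention $\Psi^*(\mathfrak{b})=0$ when $(\mathfrak{b},\mathfrak{f}_\Psi)\neq 1$) then lets me collapse the $\Psi^*$ factors:
$$(\Psi^*)^2(\mathfrak{a})\cdot\Psi^*(\mathfrak{a}^{-2}\mathfrak{m}\q)=\Psi^*(\mathfrak{m})\Psi^*(\q).$$
Pulling this scalar out of the sum leaves exactly $\Psi^*(\mathfrak{m})\Psi^*(\q)\,C(\mathfrak{m},\f\mid T_\q)$. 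Proposition \ref{proposition:crudebound} applied to the right-hand side gives $\Psi^*(\q)\,C(\mathfrak{m},(\f\mid T_\q)_\Psi)=\Psi^*(\q)\Psi^*(\mathfrak{m})\,C(\mathfrak{m},\f\mid T_\q)$, matching the left-hand side.

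The only technical point is that Proposition \ref{proposition:crudebound} is stated for a normalized newform, whereas $\f\mid T_\q$ is generally not itself a newform. This is resolved by observing that the twist formula is visibly linear in its input, so both the operation $\textbf{g}\mapsto\textbf{g}_\Psi$ and the Fourier coefficient identity $C(\mathfrak{m},\textbf{g}_\Psi)=\Psi^*(\mathfrak{m})C(\mathfrak{m},\textbf{g})$ extend by linearity from the basis of newforms to all of $\mathscr{S}_k$. I do not anticipate any genuine obstacle; the whole argument amounts to tracking multiplicativity of $\Psi^*$ on divisors of $\q$, which is precisely the remark preceding the proposition.
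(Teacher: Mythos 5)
Your proof is correct and follows the same route the paper intends: the paper dismisses this proposition as "trivial to verify using the action of the Hecke operators on Fourier coefficients," and your computation — expanding both sides via the coefficient formula for $T_{\q}$, noting $\mathfrak{a}\in\{\mathcal{O},\q\}$, and using multiplicativity of $\Psi^*$ on ideals coprime to $\mathfrak{f}_{\Psi}$ — is exactly that verification. Your worry about Proposition \ref{proposition:crudebound} being stated only for newforms is handled even more directly than by linearity: the identity $C(\mathfrak{m},\textbf{g}_{\Psi})=\Psi^*(\mathfrak{m})C(\mathfrak{m},\textbf{g})$ is a pointwise Gauss-sum computation on Fourier expansions valid for an arbitrary cusp form, so it applies to $\f\mid T_{\q}$ as is.
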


Although Proposition \ref{proposition:crudebound} gives an upper bound for the exact level of $\f_{\Psi}$, one can obtain better bounds in certain special cases. Of particular interest to us is the case in which $\Psi=\overline{\Phi}_{\cp}$. The following proposition gives an improved bound on the level of $\f_{\Psi}$ in this special case and generalizes Proposition 3.6 of \cite{atkin-li}.

\begin{proposition}\label{proposition:levelbound}
Let $\mathfrak{f}$ be the conductor of $\Phi_{\mathcal{P}}$. Set

\begin{displaymath}
\cl = \left\{ \begin{array}{ll}
\mathcal{N} & \textrm{if $ord_{\mathfrak{p}}(\mathfrak{f})<ord_{\mathfrak{p}}(\mathcal{P})$}\\
\mathfrak{p}\mathcal{N} & \textrm{if $ord_{\mathfrak{p}}(\mathfrak{f})=ord_{\mathfrak{p}}(\mathcal{P})$}\\
\end{array} \right.
\end{displaymath}
If $\f\in \mathscr{S}_k(\cn,\Phi)$ then $\textbf{f}_{\overline{\Phi}_{\mathcal{P}}}\in \mathscr{S}_k(\cl,\overline{\Phi}_{\mathcal{P}}\Phi_{{\mathcal{N}_0}})$.
\end{proposition}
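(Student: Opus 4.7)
The strategy is to apply Proposition \ref{proposition:crudebound} to obtain the crude level bound and then verify one additional right-invariance at $\p$ to refine the level down to $\cl$. Since the crude level $\mathrm{lcm}\{\cn, \cond_\Phi\mathfrak{f}, \mathfrak{f}^2\}$ agrees with $\cl$ at every prime other than $\p$, the task reduces to showing that $\f_{\overline{\Phi}_\cp}(xw) = (\overline{\Phi}_\cp\Phi_{\cn_0})_Y(w^\iota)\f_{\overline{\Phi}_\cp}(x)$ for $w \in G_A$ supported at $\p$ with $w_\p \in W_\p(\cl)$ and $w_\infty = 1$. An Iwahori-type decomposition at $\p$ further reduces this to checking the identity for generators of the form $w_\p = \left(\begin{smallmatrix}1 & 0 \\ c & 1\end{smallmatrix}\right)$ with $c \in \cl\diff\mathcal{O}_\p$, in which case the desired character factor equals $1$.

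The main computational tool is the identity in $\sltwo(K_\p)$
\[ \left(\begin{smallmatrix}1 & 0 \\ c & 1\end{smallmatrix}\right)\left(\begin{smallmatrix}1 & -r \\ 0 & 1\end{smallmatrix}\right) = \left(\begin{smallmatrix}1 & -r/u_r \\ 0 & 1\end{smallmatrix}\right)\left(\begin{smallmatrix}u_r^{-1} & 0 \\ c & u_r\end{smallmatrix}\right), \qquad u_r := 1-cr, \]
valid whenever $u_r \in \mathcal{O}_\p^\times$. A $\p$-adic valuation check using $\mathrm{ord}_\p(c) \geq \mathrm{ord}_\p(\cl\diff)$ and $\mathrm{ord}_\p(r) \geq -e - \mathrm{ord}_\p(\diff)$ (where $e = \mathrm{ord}_\p(\mathfrak{f})$) shows that $\mathrm{ord}_\p(cr) \geq \nu - e \geq 1$ when $e < \nu$, while the extra factor of $\p$ in $\cl$ when $e = \nu$ supplies exactly the missing unit, giving $\mathrm{ord}_\p(cr) \geq 1$ in that case as well. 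This is precisely why the definition of $\cl$ splits into two cases. The right factor $\left(\begin{smallmatrix}u_r^{-1} & 0 \\ c & u_r\end{smallmatrix}\right)$ belongs to $W_\p(\cn)$, so substituting the factorization into the adelic definition of $\f_{\overline{\Phi}_\cp}(xw)$ and applying the transformation rule for $\f$ contributes a character factor $\phi_\cp(u_r) = \Phi_\cp(u_r)$.

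Finally one performs a change of variables $r \mapsto r' \in K$ chosen by weak approximation so that $r' \equiv r/u_r \pmod{\diffinv\mathcal{O}_\p}$ and $r' \equiv r \pmod{\diffinv\mathcal{O}_\q}$ for $\q \neq \p$; this gives a bijection of the indexing set $\mathfrak{f}^{-1}\diffinv/\diffinv$. The main technical obstacle is the bookkeeping identity
\[ \overline{\Phi}_\cp^*(r\mathfrak{f}\diff)\,\Phi_\cp(u_r) = \overline{\Phi}_\cp^*(r'\mathfrak{f}\diff), \]
which reconciles the two Gauss-sum character factors before and after the substitution; it follows by unwinding the definition of the ideal character $\overline{\Phi}_\cp^*$ and using that $(r)\mathcal{O}_\p = u_r(r')\mathcal{O}_\p$ at $\p$ while $r' \equiv r \pmod{\diffinv}$ at every other prime. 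The Gauss-sum normalization $\tau(\Phi_\cp)^{-1}$ passes through the substitution unchanged, and the entire expression collapses to $\f_{\overline{\Phi}_\cp}(x)$ as required.
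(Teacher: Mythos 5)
Your argument is correct and, at its computational core, is the same as the paper's: the congruence $r' \equiv r/u_r \pmod{\diffinv\mathcal{O}_{\p}}$ defining your change of variables is exactly the paper's Strong Approximation condition (3), namely $a_{\p}r - r'(d_{\p}-c_{\p}\cl_{\p}\diff_{\p}r)\in\diffinv\mathcal{O}_{\p}$, specialized to $a_{\p}=d_{\p}=1$, and your valuation estimate forcing $u_r = 1-cr\in\mathcal{O}_{\p}^\times$ is precisely what dictates the two-case definition of $\cl$. The organizational difference is that the paper proves the invariance for an arbitrary $w\in W(\cl)$ with $w_\infty=1$ in one pass, solving for the full matrix $w'$, whereas you first invoke Proposition \ref{proposition:crudebound} and an Iwahori factorization at $\p$ to reduce to the single lower-unipotent generator; this reduction is legitimate (the diagonal and upper-unipotent factors already lie in $W_{\p}$ of the crude level, since every element of $W_{\p}(\cl)$ has unit upper-left entry, and $\phi_Y$ is multiplicative modulo $\cn$ on such products), and it buys you a one-line verification of the injectivity of $r\mapsto r'$ and of the character bookkeeping in place of the paper's page of local computations. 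Two small repairs: (i) the ideal character appearing in the twist by $\Psi=\overline{\Phi}_{\cp}$ is $\overline{\Psi}^{*}=\Phi_{\cp}^{*}$, not $\overline{\Phi}_{\cp}^{*}$, so the bookkeeping identity should read $\Phi_{\cp}^{*}(r\cond\diff)\,\phi_{\cp}(u_r)=\Phi_{\cp}^{*}(r'\cond\diff)$ --- your pairing of $\overline{\Phi}_{\cp}^{*}$ with the factor $\Phi_{\cp}(u_r)$ is off by a conjugate, though the mechanism you describe is the right one; (ii) the asserted bijectivity of $r\mapsto r'$ on $\cond^{-1}\diffinv/\diffinv$ deserves its one line: from $r_0/u_{r_0}-r_1/u_{r_1}=(r_0-r_1)/(u_{r_0}u_{r_1})$ and the fact that $u_{r_0},u_{r_1}$ are units, congruence of the images modulo $\diffinv\mathcal{O}_{\p}$ forces $r_0\equiv r_1$.
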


\begin{proof}

Let $\alpha\in G_K, x\in G_A$ and $w\in W(\cl)$ with $w_\infty=1$. We will show that $$\textbf{f}_{\overline{\Phi}_{\mathcal{P}}}(\alpha x w)=(\phi_{{\mathcal{N}_0}}\overline{\phi}_{\cp})_Y(w^\iota) \textbf{f}_{\overline{\Phi}_{\mathcal{P}}}(x).$$ Write $w=\left(
  \begin{array}{c c}
     \tilde{a} & \td^{-1}\tilde{b} \\
      \tilde{c}\tl\td& \tilde{d}
  \end{array} \right)$.

Let $r \in \mathfrak{f}^{-1}\mathfrak{d}^{-1}/\mathfrak{d}^{-1}$ and observe that by the Strong Approximation theorem there exists an element $r^\prime\in K$ such that

\begin{enumerate}
\item $ord_\mathfrak{q}(r^\prime)\geq 0$ for all primes $\mathfrak{q}$ such that $ord_{\q}(\cond\diff)=0$
\item $ord_\mathfrak{q}(r^\prime)\geq -ord_{\mathfrak{q}}(\mathfrak d)$ for all primes $\mathfrak q$ such that $q\mid \mathfrak d$ and $\mathfrak{q}\neq \mathfrak{p}$
\item $a_{\mathfrak{p}}r-r^\prime(d_{\mathfrak{p}}-c_{\mathfrak{p}}\cl_{\p} \mathfrak{d}_{\p} r) \in \diff^{-1}\mathcal{O}_{\p}$
\end{enumerate}
Note that such an $r^\prime$ lies in $\mathfrak{f}^{-1}\mathfrak{d}^{-1}$. We claim that such a $r^\prime$ is uniquely determined in $\mathfrak{f}^{-1}\mathfrak{d}^{-1}/\mathfrak{d}^{-1}$. More precisely, suppose that $r_0, r_1 \in \mathfrak{f}^{-1}\mathfrak{d}^{-1}/\mathfrak{d}^{-1}$ give rise to $r^0,r^1\in\mathfrak{f}^{-1}\mathfrak{d}^{-1}/\mathfrak{d}^{-1}$. We will show that if $r^0+\diffinv=r^1+\diffinv$ then $r_0+\diffinv=r_1+\diffinv$. To do this we will suppose that $(r^0-r^1)\in\diffinv$ and show that $(r_0-r_1)\in\diffinv\mathcal{O}_{\mathfrak{q}}$ for all finite primes $\q$. It will then follow from the local-global correspondence for lattices that $(r_0-r_1)\in\diffinv$.

We have two cases to consider.

Case 1 - $\q\neq\p$: Both $r_0$ and $r_1$ lie in $\cond^{-1}\diffinv$ and hence in $\cond^{-1}\diffinv\mathcal{O}_{\q}=\mathcal{O}_{{\q}}^\times\diffinv\mathcal{O}_{\q}\subset \diffinv\mathcal{O}_{\q}$. It follows that $(r_0-r_1)\in\diffinv\mathcal{O}_{\q}$.

Case 2 - $\q=\p$: By condition (3) we have

$$r_0a_{\p}-r^0(d_{\p}-c_{\p}\mathcal{L}_{\p} \mathfrak{d}_{\p}r_0)\in\mathfrak{d}^{-1}\mathcal{O}_{\p}$$
and 
$$r_1a_{\p}-r^1(d_{\p}-c_{\p}\mathcal{L}_{\p} \mathfrak{d}_{\p}r_1)\in\mathfrak{d}^{-1}\mathcal{O}_{\p}.$$

Putting these together yields 
$$r_0(a_{\p})-r^0(d_{\p}-c_{\p}\mathcal{L}_{\p} \mathfrak{d}_{\p}r_0)-r_1(a_{\p})+r^1(d_{\p}-c_{\p}\mathcal{L}_{\p} \mathfrak{d}_{\p}r_1)\in\mathfrak{d}^{-1}\mathcal{O}_{\p}.$$

Observe that by definition of $\cl$ and $W(\cl)$, each of the terms in parentheses lies in $\mathcal{O}_{\p}^\times$. We may therefore ease notation by writing $u_i$ for the parenthesized unit:
\begin{equation}\label{equation:eq1}
r_0 u_1 - r^0 u_2 - r_1 u_3 + r^1 u_4\in\mathfrak{d}^{-1}\mathcal{O}_{\p}.
\end{equation}
Also observe that $$u_1+\diffinv\mathcal{O}_{\p}=a_{\p}+\diffinv\mathcal{O}_{\p}=u_3+\diffinv\mathcal{O}_{\p}$$ and $$u_2+\diffinv\mathcal{O}_{\p}=d_{\p}+\diffinv\mathcal{O}_{\p}=u_4+\diffinv\mathcal{O}_{\p}.$$

It follows that $$r_0 u_1+\diffinv\mathcal{O}_{\p}=r_0 a_{\p}+\diffinv\mathcal{O}_{\p},$$
$$r_1 u_3+\diffinv\mathcal{O}_{\p}=r_1 a_{\p}+\diffinv\mathcal{O}_{\p},$$
$$r^0 u_2+\diffinv\mathcal{O}_{\p}=r^0 d_{\p}+\diffinv\mathcal{O}_{\p},$$ and
$$r^1 u_4+\diffinv\mathcal{O}_{\p}=r^1 d_{\p}+\diffinv\mathcal{O}_{\p}.$$

Suppose that $(r^0-r^1)\in\diffinv\subset\diffinv\mathcal{O}_{\p}$. Then $d_{\p}(r^0-r^1)\in\diffinv\mathcal{O}_{\p}$ as well. We conclude, by Equation \ref{equation:eq1}, that $(r_1 u_3-r_0 u_1)\in\diffinv\mathcal{O}_{\p}$. This means that $a_{\p}(r_1-r_0)\in\diffinv\mathcal{O}_{\p}$, hence $(r_1-r_0)\in\diffinv\mathcal{O}_{\p}$.

We have shown that $(r^0-r^1)\in\diffinv$ implies that $(r_0-r_1)\in\diffinv\mathcal{O}_{\q}$ for all finite primes $\q$, hence $(r_0-r_1)\in\diffinv$.

We now show that $$\textbf{f}_{\overline{\Phi}_{\mathcal{P}}}(\alpha x w)=(\phi_{{\mathcal{N}_0}}\overline{\phi}_{\cp})_Y(w^\iota) \textbf{f}_{\overline{\Phi}_{\mathcal{P}}}(x).$$

By definition,

\begin{equation}\label{equation:line1}
\textbf{f}_{\overline{\Phi}_{\mathcal{P}}}(\alpha x w)=\tau(\Phi_{\mathcal{P}})^{-1}\overline{\Phi}_{\mathcal{P}}(\dete(\alpha x w))\displaystyle\sum _{r\in \cond^{-1}\diffinv/\diffinv} \Phi_{\mathcal{P}}^*(r\cond\diff)\f\mid \left(
  \begin{smallmatrix}
    1& r \\
      0& 1
  \end{smallmatrix} \right)_0 (\alpha x w)
  \end{equation}

  \begin{equation}\label{equation:line2}
  =\tau(\Phi_{\mathcal{P}})^{-1}\overline{\Phi}_{\mathcal{P}}(\dete(x))\overline{\Phi}_{\mathcal{P}}(\dete(w))\displaystyle\sum _{r\in \cond^{-1}\diffinv/\diffinv}\Phi_{\mathcal{P}}^*(r\cond\diff)\f(\alpha x w  \left(
  \begin{smallmatrix}
    1& -r \\
      0& 1
  \end{smallmatrix} \right)_0 )\end{equation}
  
  Let $r^\prime\in \cond^{-1}\diffinv / \diffinv$ correspond to $r$ (i.e. $r^\prime$ satisfies the three conditions listed in the first paragraph of this proof) and $w^\prime$ be a solution to the matrix equation

  \begin{equation}\label{equation:matrixeq}
  w \left(
  \begin{array}{ c c }
    1& -r \\
      0& 1
  \end{array} \right)_0=   \left(
  \begin{array}{ c c }
    1& -r^\prime \\
      0& 1
  \end{array} \right)_0w^\prime.
  \end{equation}
  
  We note that 
  
  \begin{equation}\label{equation:matrixwprime}
  w^\prime=\left(
  \begin{array}{ c c }
    \tilde{a} + \tilde{c} \tl \td r^\prime& \td^{-1}\tilde{b}+\tilde{d}r^\prime-\tilde{a}\nu-\tilde{c}\tl\td r r^\prime \\
      \tilde{c}\tl\td& \tilde{d}-\tilde{c}\tl\td r
  \end{array} \right)
  \end{equation}
  
  and that the three conditions defining $r^\prime$ imply that $w^\prime\in W(\cn)$.

Substituting equation \ref{equation:matrixeq} into equation \ref{equation:line2} yields

\begin{equation}\label{equation:line3}
 \tau(\Phi_{\mathcal{P}})^{-1}\overline{\Phi}_{\mathcal{P}}(\dete(x))\overline{\Phi}_{\mathcal{P}}(\dete(w))\displaystyle\sum _{r\in \cond^{-1}\diffinv/\diffinv} \Phi_{\mathcal{P}}^*(r\cond\diff)\f(\alpha x \left(
  \begin{smallmatrix}
    1& -r^\prime \\
      0& 1
  \end{smallmatrix} \right)_0 w^\prime )
\end{equation}

Because $\f\in \mathscr{S}_k(\mathcal{N},\Phi)$, we may rewrite this as

\begin{equation}\label{equation:line4}
 =\tau(\Phi_{\mathcal{P}})^{-1}\overline{\Phi}_{\mathcal{P}}(\dete(x))\overline{\Phi}_{\mathcal{P}}(\dete(w))\displaystyle\sum _{r\in \cond^{-1}\diffinv/\diffinv} \Phi_{\mathcal{P}}^*(r\cond\diff){\phi}_Y ( ({w^\prime})^\iota )\f\mid \left(
  \begin{smallmatrix}
    1& r^\prime \\
      0& 1
  \end{smallmatrix} \right)_0 (x)  
\end{equation}

\begin{equation}\label{equation:line5}
 =\tau(\Phi_{\cp})^{-1}\overline{\Phi}_{\cp}(\dete(x))\overline{\Phi}_{\mathcal{P}}(\dete(w))\displaystyle\sum _{r\in \cond^{-1}\diffinv/\diffinv}\Phi_{\cp}^*(r\cond\diff)\phi_{{\mathcal{N}_0}}(d_{\p})\phi_{\cp}( d_{\p})\f\mid \left(
  \begin{smallmatrix}
    1& r^\prime \\
      0& 1
  \end{smallmatrix} \right)_0 (x)  
\end{equation}

\begin{equation}\label{equation:line5}
 =\phi_{{\mathcal{N}_0}}(d_{\p})\tau(\Phi_{\cp})^{-1}\overline{\Phi}_{\cp}(\dete(x))\overline{\Phi}_{\mathcal{P}}(\dete(w))\displaystyle\sum _{r\in \cond^{-1}\diffinv/\diffinv} \Phi_{\cp}^*(r\cond\diff)\phi_{\cp}( d_{\p})\f\mid \left(
  \begin{smallmatrix}
    1& r^\prime \\
      0& 1
  \end{smallmatrix} \right)_0 (x)  
\end{equation}

We proceed by rewriting the sum in terms of $r^\prime$ rather than $r$. To do this we consider the expression
$$\Phi_{\cp}^*(r\cond\diff)\phi_{\cp}( d_{\p})$$
inside the summation.

As $\Phi_{\cp}(\tilde{\alpha})=\Phi_{\cp}^*(\tilde{\alpha}\mathcal{O}_K)\phi_{\cp}(\tilde{\alpha})$ for all $\tilde{\alpha}\in J_K$ with $(\tilde{\alpha}\mathcal{O}_K,\p)=1$ this expression is equal to:

$$\Phi_{\cp}(r\tf\td)\overline{\phi}_{\cp}(r\mathfrak{f}_{\p}\mathfrak{d}_{\p})\phi_{\cp}(d_{\p}).$$

Setting $D_{\p}=\dete(w_{\p})=a_{\p}d_{\p}$, we rewrite this expression as

$$\Phi_{\cp}(r\tf\td)\overline{\phi}_{\cp}(r\mathfrak{f}_{\p}\mathfrak{d}_{\p})\overline{\phi}_{\cp}(a_{\p} D^{-1}_{\p})=\Phi_{\cp}(r\tf\td)\overline{\phi}_{\cp}(a_{\p}r\mathfrak{f}_{\p}\mathfrak{d}_{\p})\overline{\phi}_{\cp}( D^{-1}_{\p}).$$

Recall the third condition defining $r^\prime$: $a_{\mathfrak{p}}r-r^\prime(d_{\mathfrak{p}}-c_{\mathfrak{p}}\mathcal{L}_{\p}\mathfrak{d}_{\p} r) \in \diff^{-1}\mathcal{O}_{\p}$. This implies $$a_{\mathfrak{p}}r\mathfrak{f}_{\p}\mathfrak{d}_{\p}-r^\prime\mathfrak{f}_{\p}\mathfrak{d}_{\p}(d_{\mathfrak{p}}-c_{\mathfrak{p}}\tilde{\cl}_{\p}\mathfrak{d}_{\p} r) \in\cond\mathcal{O}_{\p},$$ and in particular, $a_{\mathfrak{p}}r\mathfrak{f}_{\p}\mathfrak{d}_{\p}-d_{\mathfrak{p}}r^\prime\mathfrak{f}_{\p}\mathfrak{d}_{\p}\in\cond\mathcal{O}_{\p}$. This, along with the fact that $\Phi_{\cp}(r)=\Phi_{\cp}(r^\prime)=1$, shows that we now have

$$\Phi_{\cp}(r^\prime\tf\td)\overline{\phi}_{\cp}(d_{\p}r^\prime\mathfrak{f}_{\p}\mathfrak{d}_{\p})\overline{\phi}_{\cp}( D^{-1}_{\p})=\Phi_{\cp}(r^\prime\tf\td)\overline{\phi}_{\cp}(r^\prime\mathfrak{f}_{\p}\mathfrak{d}_{\p})\phi_{\cp}(a_{\p})=\Phi_{\cp}^*(r^\prime\cond\diff)\phi_{\cp}(a_{\p}).$$

We have shown that $\Phi_{\cp}^*(r\cond\diff)\phi_{\cp}( d_{\p})=\Phi_{\cp}^*(r^\prime\cond\diff)\phi_{\cp}(a_{\p})$.

We rewrite equation \ref{equation:line5} as

\begin{equation}\label{equation:line6}
 =\phi_{{\mathcal{N}_0}}(d_p)\tau(\Phi_{\cp})^{-1}\overline{\Phi}_{\cp}(\dete(x))\overline{\Phi}_{\mathcal{P}}(\dete(w))\phi_{\cp}( a_{\p})\displaystyle\sum _{r^\prime \in \cond^{-1}\diffinv/\diffinv} \Phi_{\cp}^*(r^\prime\cond\diff)\f\mid \left(
  \begin{smallmatrix}
    1& r^\prime \\
      0& 1
  \end{smallmatrix} \right)_0 (x)  
\end{equation}

By definition of $W(\cl)$, $\dete(w)\in\mathcal{O}_{\q}^\times$ for all finite primes $\q$. It follows that $$\overline{\Phi}_{\cp}(\dete(w))=\overline{\phi}_{\cp}(\dete(w))=\overline{\phi}_{\cp}(a_{\p}d_{\p}).$$

We therefore rewrite equation \ref{equation:line6} as

\begin{equation}\label{equation:line7}
 =\phi_{{\mathcal{N}_0}}(d_p)\tau(\Phi_{\cp})^{-1}\overline{\Phi}_{\cp}(\dete(x))\overline{\phi}_{\cp}(a_{\p}d_{\p})\phi_{\cp}( a_{\p})\displaystyle\sum _{r^\prime \in \cond^{-1}\diffinv/\diffinv} \Phi_{\cp}^*(r^\prime\cond\diff)\f\mid \left(
  \begin{smallmatrix}
    1& r^\prime \\
      0& 1
  \end{smallmatrix} \right)_0 (x)  
\end{equation}

This is equal to $\phi_{{\mathcal{N}_0}}(d_{\p})\overline{\phi}_{\cp}(d_{\p})\f_{\overline{\Phi}_{\mathcal{P}}}(x)=(\phi_{{\mathcal{N}_0}}\overline{\phi}_{\cp})_Y(w^\iota)\f_{\overline{\Phi}_{\mathcal{P}}}(x)$.

Therefore $\f_{\overline{\Phi}_{\mathcal{P}}}(\alpha x w)=(\phi_{{\mathcal{N}_0}}\overline{\phi}_{\cp})_Y(w^\iota)\f_{\overline{\Phi}_{\mathcal{P}}}(x)$ for $\alpha\in G_K, x\in G_A$ and $w\in W(\mathcal{L})$ with $w_\infty=1$. 

It follows that $\f_{\overline{\Phi}_{\mathcal{P}}}\in \mathscr{S}_k(\mathcal{L},\overline{\Phi}_{\mathcal{P}}\Phi_{{\mathcal{N}_0}})$.\end{proof}

If $\f\in \mathscr{S}_k^+(\cn,\Phi)$ is a normalized newform and $\Psi$ is a Hecke character with $(\mathfrak{f}_{\Phi},\mathfrak{f}_{\Psi})=1$, then $\f_{\Psi}$ is always a normalized newform of $\mathscr{S}_k^+(\mathfrak{f}_{\Psi}^2\cn,\Psi^2\Phi)$ by Theorem 5.5 of \cite{shemanske-walling}. The situation when the conductors of $\Phi$ and $\Psi$ are not coprime is much more subtle and will be studied throughout the remainder of this paper. Clearly it suffices to consider characters whose conductor is a power of a single prime dividing the level $\cn$. We therefore suppose that $\Psi$ is a $\p$-primary Hecke character. 

Henceforth we assume that $\Psi$ is a Hecke character with conductor dividing $\cp$. The infinite part of $\Psi$ has the form $\Psi_{\infty}(a)=\mbox{sgn}(a)^l |a|^{ir}$ for $l\in\mathbb Z^n$, $r\in\mathbb R^n$ and $a\in K_{\infty}^\times$. In what follows we shall always choose $\Psi$ so that $r=0$.

We will see that the vanishing of $C(\p,\f)$ lies at the heart of the question of whether or not $\f_{\Psi}$ is a newform of $\mathscr{S}_k(\cn,\Psi^2\Phi)$. We present a slightly strengthened version of Theorem 3.3 of \cite{shemanske-walling}, which will allow us to determine when $C(\p,\f)\neq 0$.

\begin{theorem}\label{theorem:threethree} Let $\f$ be a normalized newform lying in $\mathscr{S}_k(\cn,\Phi)$.
\begin{enumerate}
\item The Dirichlet series attached to $\f$, $D(s,f)=\sum_{\mathfrak{m}\subset \mathcal{O}} C(\mathfrak{m},\f) N(\mathfrak{m})^{-s}$ has an Euler product 

$$D(s,\f)=\prod_{\qz\mid \cn} (1-C(\qz,\f)N(\qz)^{-s})^{-1}\times \prod_{\qo\nmid\cn}(1-C(\qo,\f)N(\qo)^{-s}+\Phi^*(\qo)N(\qo)^{k_0-1-2s})^{-1}$$

\item If $\phi$ is not defined modulo $\cn \p^{-1}$, then $|C(\p,\f)|=N(\p)^{\frac{(k_0-1)}{2}}$.

\item If $\phi$ is a character modulo $\cn\p^{-1}$, then $C(\p,\f)=0$ if $\p^2\mid\cn$ and $|C(\p,\f)|^2=N(\p)^{k_0-2}$ if $\p^2\nmid\cn$.
\end{enumerate}
\end{theorem}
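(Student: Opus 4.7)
The plan is to derive Part (1) from the Hecke recursion on Fourier coefficients, and to obtain Parts (2) and (3) from Theorem 3.3 of \cite{shemanske-walling} together with a careful analysis of the local behavior at $\p$.

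For Part (1), I would use that $\f$ is a normalized Hecke eigenform (and, by strong multiplicity one following from Theorem 3.5 of \cite{shemanske-walling}, an eigenform for $T_{\qz}$ when $\qz \mid \cn$ as well). Substituting $\mathfrak{m} = \q^n$ into the Fourier coefficient formula for $T_\q$ (only divisors $\mathfrak{a} \mid \q$ contribute) and comparing with $C(\q^n, \f\mid T_\q) = C(\q,\f) C(\q^n, \f)$ yields
\[
C(\q^{n+1},\f) = C(\q,\f) C(\q^n,\f) - \Phi^*(\q) N(\q)^{k_0-1} C(\q^{n-1},\f).
\]
For $\qo \nmid \cn$ this is the standard three-term recursion whose generating series is the quadratic Euler factor. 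For $\qz \mid \cn$ one has $\Phi^*(\qz)=0$, so the recursion degenerates to $C(\qz^{n+1},\f) = C(\qz,\f) C(\qz^n,\f)$, producing the linear factor. Multiplicativity of $C(\cdot,\f)$ across coprime ideals assembles the Euler product.

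For Part (2), the hypothesis that $\phi$ is not defined modulo $\cn\p^{-1}$ forces the $\p$-part of $\mathfrak{f}_\Phi$ to be exactly $\cp$; the equality $|C(\p,\f)| = N(\p)^{(k_0-1)/2}$ is then the content of Theorem 3.3 of \cite{shemanske-walling}. For Part (3), $\phi$ descends modulo $\cn\p^{-1}$, so the $\p$-part of $\mathfrak{f}_\Phi$ is a proper divisor of $\cp$. When $\p \| \cn$, the exact value $|C(\p,\f)|^2 = N(\p)^{k_0-2}$ is again Shemanske-Walling; the new content, responsible for the word ``strengthened,'' is the vanishing $C(\p,\f)=0$ when $\p^2 \mid \cn$.

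The main obstacle is this vanishing. My approach is by contradiction: suppose $C(\p,\f) \neq 0$. Since $\Phi^*(\p)=0$, the recursion reduces to $C(\p^n,\f) = C(\p,\f)^n$, and the hypothesis that $\phi$ is defined modulo $\cn\p^{-1}$ lets me view the character data at the smaller level. Using this, I would construct a form $\g \in \mathscr{S}_k(\cn\p^{-1},\Phi)$ whose $T_\q$-eigenvalues for $\q \nmid \cn$ agree with those of $\f$. The shift $\g \mid B_\p$ then lies in $\mathscr{S}_k^-(\cn,\Phi)$ by definition and is equivalent to $\f$, contradicting $\f \in \mathscr{S}_k^+(\cn,\Phi)$ and the uniqueness statement (up to scalars) recorded at the end of Section \ref{section:prelims}. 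The hypothesis $\p^2 \mid \cn$ is essential because it provides the room for the level to drop by exactly one power of $\p$ while still supporting the character $\phi$; when $\p \| \cn$ this construction is blocked, which is why the alternative conclusion $|C(\p,\f)|^2 = N(\p)^{k_0-2}$ must hold instead.
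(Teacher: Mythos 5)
There is a genuine gap, and it comes from misidentifying where the new content of this theorem lies. The paper's own proof is essentially a citation: everything except one clause is Theorem 3.3 of \cite{shemanske-walling}, and the one clause that is \emph{not} in \cite{shemanske-walling} is precisely the statement you dispose of by citation. Shemanske--Walling's part (2) only proves the dichotomy ``either $C(\p,\f)=0$ or $|C(\p,\f)|=N(\p)^{(k_0-1)/2}$'' (plus nonvanishing for a density-one set of primes); the strengthening here is that $C(\p,\f)$ is \emph{never} zero when $\phi$ is not defined modulo $\cn\p^{-1}$, which the paper attributes to Buzzard \cite{buzzard} and which requires a genuinely new (local representation-theoretic) input. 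Your proposal asserts that the equality in part (2) ``is the content of Theorem 3.3 of \cite{shemanske-walling}''; it is not, and nothing in your argument rules out $C(\p,\f)=0$ in that case. Conversely, the vanishing $C(\p,\f)=0$ when $\p^2\mid\cn$ and $\phi$ descends, which you single out as the new content ``responsible for the word strengthened,'' is already part (3) of Shemanske--Walling's theorem and needs no new proof here.

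Your sketch of that vanishing also has its own gap: the step ``I would construct a form $\g\in\mathscr{S}_k(\cn\p^{-1},\Phi)$ whose $T_\q$-eigenvalues agree with those of $\f$'' is the entire difficulty, not a routine construction. Producing such a $\g$ from the nonvanishing of $C(\p,\f)$ is exactly what the Atkin--Lehner--Li / Shemanske--Walling machinery (pseudo-eigenvalues of the $W_\p$-operator and the adjoint of $T_\p$) is needed for; as written, your argument assumes the oldness of $\f$ that it is trying to derive. The recursion argument for part (1) is fine and matches the standard derivation, but for parts (2) and (3) you should either cite \cite{shemanske-walling} for what it actually proves and \cite{buzzard} for the nonvanishing, or supply the local argument for nonvanishing yourself.
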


\begin{proof}
The statement of this theorem differs from Theorem 3.3 of \cite{shemanske-walling} only in that part 2 of the latter showed that either $C(\p,\f)=0$ or $|C(\p,\f)|=N(\p)^{\frac{(k_0-1)}{2}}$ and that $C(\p,\f)$ was non-zero for a set of primes having density 1. Kevin Buzzard has recently shown that in fact, $C(\p,\f)$ is never zero (see \cite{buzzard}), allowing us to state the above theorem in its strengthened form.
\end{proof}

Henceforth we use the letter $\nu$ to denote $ord_{\p}(\cp)=ord_{\p}(\cn)$.

\begin{lemma}\label{lemma:vanishingcoeffs}
Assume that $\nu \geq 2$ and that $e(\Phi_{\cp})<\nu$. If $\textbf{f}\in\mathscr{S}_k^+(\cn,\Phi)$ is a normalized newform then $\textbf{f}_{\overline{\Psi}\Psi}=\textbf{f}$. In particular, $$\mathscr{S}_k^+(\cn, \Phi)^{\overline{\Psi}\Psi}=\mathscr{S}_k^+(\cn, \Phi).$$
\end{lemma}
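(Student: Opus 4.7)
The plan is to reduce the claimed equality $\f_{\overline{\Psi}\Psi} = \f$ to a matching of Fourier coefficients, and to produce this matching by first forcing the vanishing of $C(\mathfrak{m},\f)$ whenever $\p \mid \mathfrak{m}$. The first step is to translate the hypotheses directly into the scenario of Theorem \ref{theorem:threethree}(3): the condition $e(\Phi_{\cp}) < \nu$ says precisely that $\phi$ is defined modulo $\cn\p^{-1}$, and $\nu \geq 2$ says $\p^2 \mid \cn$. Theorem \ref{theorem:threethree}(3) therefore immediately yields $C(\p,\f) = 0$.

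The second step is to promote this single vanishing to $C(\mathfrak{m},\f) = 0$ for every integral $\mathfrak{m}$ divisible by $\p$. Since $\p \mid \cn$, the local factor at $\p$ in the Euler product of Theorem \ref{theorem:threethree}(1) is $(1 - C(\p,\f) N(\p)^{-s})^{-1}$, which collapses to $1$; hence $C(\p^{j},\f) = 0$ for all $j \geq 1$, and the accompanying multiplicativity across coprime factors extends this to $C(\mathfrak{m},\f) = 0$ whenever $\p \mid \mathfrak{m}$. This is the step I expect to require the most care, because it is where the Euler product structure of newforms does the real work; everything before is hypothesis translation and everything after is bookkeeping.

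With the vanishing established, I would compute the Fourier coefficients of $\f_{\overline{\Psi}\Psi}$, interpreted as the iterated twist $(\f_{\overline{\Psi}})_{\Psi}$. Two applications of Proposition \ref{proposition:crudebound} give
\[
C(\mathfrak{m},\f_{\overline{\Psi}\Psi}) = \Psi^*(\mathfrak{m})\,\overline{\Psi}^*(\mathfrak{m})\,C(\mathfrak{m},\f).
\]
When $(\mathfrak{m},\p) = 1$ the product $\Psi^*(\mathfrak{m})\,\overline{\Psi}^*(\mathfrak{m}) = |\Psi^*(\mathfrak{m})|^2$ equals $1$, since our standing convention $r = 0$ in the infinite type forces $\Psi$ to be unitary. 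When $\p \mid \mathfrak{m}$ both sides vanish: the right by the previous step, and the left because $\Psi^*(\mathfrak{m}) = 0$ by the very definition of the ideal character. Hence $C(\mathfrak{m},\f_{\overline{\Psi}\Psi}) = C(\mathfrak{m},\f)$ for every $\mathfrak{m}$.

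Since a Hilbert cusp form is determined by its Fourier coefficients, the matching above gives $\f_{\overline{\Psi}\Psi} = \f$. The ``in particular'' clause then follows at once: $\mathscr{S}_k^+(\cn,\Phi)$ admits an orthogonal basis of normalized newforms, each individually fixed by the $\overline{\Psi}\Psi$-twist, so the twist operator fixes the whole space.
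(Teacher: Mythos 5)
Your proof is correct and follows essentially the same route as the paper: both arguments hinge on Theorem \ref{theorem:threethree}(3) giving $C(\p,\f)=0$, extend this to $C(\mathfrak{m},\f)=0$ for all $\p\mid\mathfrak{m}$ via multiplicativity, and then identify $\f_{\overline{\Psi}\Psi}$ with $\f$. The only cosmetic difference is that the paper packages the last step as the observation $\f_{\overline{\Psi}\Psi}=\f\mid A_{\p}$ together with the fact that $A_{\p}$ acts as the identity here, whereas you compare Fourier coefficients directly; these are the same computation.
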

\begin{proof}
It follows immediately from Theorem \ref{theorem:threethree}.(3) that $C(\mathfrak{p},\textbf{f})=0$. Because $\f$ is an eigenform of $T_{\p}$ with eigenvalue $C(\p,\f)$, $C(\mathscr{I}\mathfrak{p},\textbf{f})=C(\mathscr{I},\textbf{f})C(\mathfrak{p},\textbf{f})=0$ for all integral ideals $\mathscr{I}$. Thus the annihilator operator $A_{\mathfrak p}$ acts as the identity operator on the newforms of level $\cn$ and character $\Phi$. The first part therefore follows from the observation that $\textbf{f}_{\overline{\Psi}\Psi}=\textbf{f}\mid A_\mathfrak{p}$.  As newforms generate the space $\mathscr{S}_k^+(\cn, \Phi)$, we have the second part as well.\end{proof}

\begin{proposition}\label{proposition:liesinnewformspace}
Assume that $\nu\geq 2$ and that $0<e(\Phi_{\cp})<\nu$. If $\textbf{f}\in \mathscr{S}_k^+(\cn,\Phi)$ is a newform then $\textbf{f}_{\overline{\Phi}_{\cp}} \in \mathscr{S}_k^+(\cn,\overline{\Phi}_{\cp}\Phi_{{\mathcal{N}_0}})$ is a newform as well.
\end{proposition}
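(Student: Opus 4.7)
The plan is to verify that $\f_{\overline{\Phi}_\cp}$ is a normalized simultaneous Hecke eigenform lying in $\mathscr{S}_k^+(\cn,\overline{\Phi}_\cp\Phi_{\cn_0})$, hence a newform. The setup and the eigenform property come immediately from earlier results, so the real content is showing membership in the new subspace.

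First I would dispose of the easy pieces. Since $\ord_\p(\mathfrak{f}_{\Phi_\cp})=e(\Phi_\cp)<\nu$, Proposition~\ref{proposition:levelbound} places $\f_{\overline{\Phi}_\cp}$ in $\mathscr{S}_k(\cn,\overline{\Phi}_\cp\Phi_{\cn_0})$, and Proposition~\ref{proposition:crudebound} gives the normalization $C(\mathcal O,\f_{\overline{\Phi}_\cp})=\overline{\Phi}_\cp^*(\mathcal O)C(\mathcal O,\f)=1$. Every prime $\q\nmid\cn$ is coprime to $\mathfrak f_{\overline{\Phi}_\cp}=\p^{e(\Phi_\cp)}$, so Proposition~\ref{proposition:eigentwist} yields $\f_{\overline{\Phi}_\cp}\mid T_\q = \overline{\Phi}_\cp^*(\q)\,C(\q,\f)\,\f_{\overline{\Phi}_\cp}$, making $\f_{\overline{\Phi}_\cp}$ a simultaneous eigenform for the relevant Hecke operators.

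The substantive step is showing $\f_{\overline{\Phi}_\cp}\in\mathscr{S}_k^+(\cn,\overline{\Phi}_\cp\Phi_{\cn_0})$, which I do by contradiction. If this fails, the eigenform property forces $\f_{\overline{\Phi}_\cp}\sim\textbf{h}$ for some newform $\textbf{h}\in\mathscr{S}_k^+(\cn_1,\overline{\Phi}_\cp\Phi_{\cn_0})$, where $\cn_1=\p^{\nu_1}\cn_{0,1}$ is a proper divisor of $\cn$; the $\p$-conductor of the character forces $\nu_1\ge e(\Phi_\cp)$. Twisting $\textbf{h}$ by $\Phi_\cp=\overline{\overline{\Phi}_\cp}$ and combining Proposition~\ref{proposition:eigentwist} with Lemma~\ref{lemma:vanishingcoeffs} (which supplies $(\f_{\overline{\Phi}_\cp})_{\Phi_\cp}=\f$) gives $\textbf{h}_{\Phi_\cp}\sim\f$. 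Proposition~\ref{proposition:levelbound} applied to this second twist then places $\textbf{h}_{\Phi_\cp}$ at level $\cn_1$ if $e(\Phi_\cp)<\nu_1$, and at $\p\cn_1$ if $e(\Phi_\cp)=\nu_1$. In every subcase except the boundary $\nu_1=e(\Phi_\cp)=\nu-1$ combined with $\cn_{0,1}=\cn_0$, this resulting level properly divides $\cn$, so $\f$ is $\sim$-equivalent to a form at a proper divisor of $\cn$, contradicting its being a newform of exact level $\cn$.

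I expect the main obstacle to be this one boundary subcase, where the level bound only places $\textbf{h}_{\Phi_\cp}$ at $\p\cn_1=\cn$. Here multiplicity one forces $\textbf{h}_{\Phi_\cp}=c\f$ for some scalar $c$; applying the $\overline{\Phi}_\cp$-twist to this identity and computing $(\textbf{h}_{\Phi_\cp})_{\overline{\Phi}_\cp}$ from the Fourier coefficient definition of the twist — using that $\textbf{h}$ is a $T_\p$-eigenform — yields $c\,\f_{\overline{\Phi}_\cp}=\textbf{h}-C(\p,\textbf{h})\,\textbf{h}\mid B_\p$, a form manifestly in $\mathscr{S}_k^-(\cn,\overline{\Phi}_\cp\Phi_{\cn_0})$. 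The leverage must then come from contrasting the non-vanishing $C(\p,\textbf{h})\ne 0$ supplied by Theorem~\ref{theorem:threethree}(2) (the $\p$-conductor of the character of $\textbf{h}$ equals $\nu_1$) with the vanishing $C(\p,\f)=0$ supplied by Theorem~\ref{theorem:threethree}(3), and carefully tracking how the explicit decomposition above interacts with the newform hypothesis $\f\in\mathscr{S}_k^+(\cn,\Phi)$ to produce the final contradiction. This bookkeeping is the technically most delicate portion of the proof.
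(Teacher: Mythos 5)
Your reduction is correct up to the boundary case, and in every non-boundary subcase your mechanism (the twist $\textbf{h}_{\Phi_{\cp}}$ lands in $\mathscr{S}_k(\cn_2,\Phi)$ for a proper divisor $\cn_2$ of $\cn$, so multiplicity one would force the newform $\f$ into $\mathscr{S}_k^-(\cn,\Phi)$) is essentially the paper's argument. The genuine gap is the boundary case $\nu_1=e(\Phi_{\cp})=\nu-1$, $\cn_{0,1}=\cn_0$, which you explicitly leave open, and the mechanism you gesture at cannot close it. Once you arrive at $\f_{\overline{\Phi}_{\cp}}=\textbf{h}-C(\p,\textbf{h})\,\textbf{h}\mid B_{\p}=\textbf{h}\mid A_{\p}$, equivalently $\f=\textbf{h}_{\Phi_{\cp}}$, there is no numerical clash left to exploit: the $\p$-th coefficient of $\textbf{h}\mid A_{\p}$ is $C(\p,\textbf{h})-C(\p,\textbf{h})C(\mathcal{O},\textbf{h})=0$, matching $\overline{\Phi}_{\cp}^*(\p)C(\p,\f)=0$ on the left, and the levels, characters, and all the constraints of Theorem \ref{theorem:threethree} are satisfied on both sides. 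So ``contrasting $C(\p,\textbf{h})\neq 0$ with $C(\p,\f)=0$'' by tracking Fourier coefficients produces no contradiction; the configuration is internally consistent at the level of coefficients.

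The missing ingredient is a substantive external input, not bookkeeping: Corollary 6.4 of \cite{shemanske-walling} (the Hilbert analogue of Atkin--Li), which asserts that if $\g$ is a newform with $C(\p,\g)\neq 0$, then the twist of $\g$ by the conjugate of the $\p$-part of its own character is not a newform of \emph{any} level; its proof rests on Gauss sums and pseudo-eigenvalues of Fricke-type involutions, not on coefficient comparison. This is exactly how the paper kills the boundary case: there one shows $\nu=\mu+1$ and $\f=\g_{\Phi_{\cp}}$ by multiplicity one and normalization, notes $C(\p,\g)\neq 0$ by Theorem \ref{theorem:threethree}(2), and then Corollary 6.4 says $\g_{\Phi_{\cp}}$ cannot be a newform, contradicting that $\f$ is one. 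Import that corollary and your argument closes. (A minor point in your favor: in the subcase $e(\Phi_{\cp})<\nu_1$ with $\cn_1$ a proper divisor of $\cn$, your twist-back level argument rules the case out entirely, which is slightly cleaner than the paper's treatment there, since the statement requires exact level $\cn$.)
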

\begin{proof}
Let  $\textbf{f}\in \mathscr{S}_k^+(\cn,\Phi)$ be a normalized newform. By  Proposition \ref{proposition:levelbound}, $\textbf{f}_{\overline{\Phi}_{\cp}} \in \mathscr{S}_k(\cn, \overline{\Phi}_{\cp}\Phi_{{\mathcal{N}_0}})$, and by Proposition \ref{proposition:eigentwist}, $\f_{\overline{\Phi}_{\cp}}$ is an eigenfunction of all the Hecke operators $T_\mathfrak{q}$ with $\mathfrak{q}$ a prime not dividing $\cn$, so there exists an ideal $\cn_0^\prime\mid\cn_0$, an integer $\mu$ satisfying $1\leq e(\Phi_{\cp})\leq\mu \leq \nu$ and a newform $\g\in \mathscr{S}_k^+(\p^\mu {\mathcal{N}_0^\prime},\overline{\Phi}_{\cp}\Phi_{{\mathcal{N}_0}})$ such that $\f_{\overline{\Phi}_{\cp}} \sim \g$.  We claim that $\cn_0^\prime=\cn_0$. Note that $\f=\f_{\overline{\Phi}_{\cp}\Phi_{\cp}}\sim \g_{\Phi_{\cp}}$ by Lemma \ref{lemma:vanishingcoeffs}, where $\g_{\Phi_{\cp}}$ has level $\p^\lambda \cn_0^\prime$ for some non-negative integer $\lambda$. Thus $\cn_0\mid\cn_0^\prime$, hence $\cn_0=\cn_0^\prime$.

If $\mu=\nu$ then $\f_{\overline{\Phi}_{\cp}}$ and $\g$ are of the same level, hence there exists $c\in\mathbb{C}$ such that $\f_{\overline{\Phi}_{\cp}}=c\g$. As both forms are normalized, $c=1$ and $\f_{\overline{\Phi}_{\cp}}=\g$ is a newform, finishing the proof. We may therefore suppose that $\mu<\nu$. We claim that  $e(\overline{\Phi}_{\cp})<\mu$. To show this, we will assume that $e(\overline{\Phi}_{\cp})=e(\Phi_{\cp})=\mu$ and derive a contradiction.

Because $\f_{\overline{\Phi}_{\cp}} \sim \g$, we have $\f_{\overline{\Phi}_{\cp}\Phi_{\cp}}\sim \g_{\Phi_{\cp}}$ as well. By Lemma \ref{lemma:vanishingcoeffs}, $\f_{{\overline{\Phi}_{\cp}\Phi_{\cp}}}=\f$, hence $\f\sim \g_{\Phi_{\cp}}$. By Proposition \ref{proposition:levelbound}, $\g_{\Phi_{\cp}}\in \mathscr{S}_k(\p^{\mu+1}{\mathcal{N}_0},\Phi)$. Therefore $\nu\leq \mu+1$, meaning that $$ \mu+1\geq \nu > \mu.$$

It is thus clear than $\nu=\mu+1$. This means that $\f$ is a newform of level $\p^{\mu+1}{\mathcal{N}_0}$ and character $\Phi$ and $\g_{\Phi_{\cp}}$ is a normalized cuspform in the same space which is equivalent to it. Therefore there exists $c\in\mathbb{C}$ such that $\f=c\g_{\Phi_{\cp}}$. As both $\f$ and $\g_{\Phi_{\cp}}$ are normalized, we see that $c=1$ and $\f=\g_{\Phi_{\cp}}$. But as $C(\p,\g)\neq 0$ by Theorem \ref{theorem:threethree}(2), this contradicts Corollary 6.4 of \cite{shemanske-walling}, which implies that $\g_{\Phi_{\cp}}$ is not a newform of any level.

We conclude that $e(\overline{\Phi}_{\cp})<\mu$. If $\mu\geq 2$ then Theorem \ref{theorem:threethree}(3) implies that the $\p$-th coefficient $C(\p,\g)$ of $\g$ is zero. Since $C(\p,\g)=0$ we have $\g=\g\mid A_{\p}$. But $$\textbf{f}_{\overline\Phi_{\cp}}=c_{\mathcal{O}}\g+c_{\p}\g\mid B_{\p}$$ and one easily checks by comparing Fourier coefficients that $c_{\mathcal{O}}=1$ and $c_{\p}=-C(\p,\g)$. Then $\textbf{f}_{\overline{\Phi}_{\cp}}=\g-C(\p,\g)\g\mid B_{\p}=\g\mid A_{\p}=\g$. Therefore $\f_{\overline{\Phi}_{\cp}}$ is a newform and we're done.

Now suppose that $\mu=1$. Then $e(\overline{\Phi}_{\cp})<\mu$ implies that $\Phi_{\cp}$ is trivial. This contradicts our hypothesis that $\Phi_{\cp}$ is nontrivial. \end{proof}

\begin{proposition}\label{proposition:liesinnewformspacearbitrary}
Assume that $0<e(\Psi)< \frac{\nu}{2}$ and $e(\Phi_{\cp})+e(\Psi)< \nu$. 

If $\f\in \mathscr{S}_k^+(\cn,\Phi)$ is a newform then $\f_\Psi\in \mathscr{S}_k^+(\cn,\Psi^2\Phi)$ is a newform as well.
\end{proposition}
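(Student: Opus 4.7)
My plan is to mirror the strategy used in the proof of Proposition~\ref{proposition:liesinnewformspace}. First, the hypotheses $2e(\Psi) < \nu$ and $e(\Phi_{\cp}) + e(\Psi) < \nu$ make the level bound of Proposition~\ref{proposition:crudebound} collapse: the $\p$-part of $\mbox{lcm}\{\cn, \cond_{\Phi}\mathfrak{f}_{\Psi}, \mathfrak{f}_{\Psi}^2\}$ is still exactly $\p^{\nu}$, so $\f_{\Psi} \in \mathscr{S}_k(\cn, \Psi^2\Phi)$. By Proposition~\ref{proposition:eigentwist}, $\f_{\Psi}$ is a simultaneous eigenform for every $T_{\q}$ with $\q \nmid \cn$, so the newform theory of Shemanske--Walling furnishes an ideal $\cn_0' \mid \cn_0$, an integer $0 \leq \mu \leq \nu$, and a newform $\g \in \mathscr{S}_k^+(\p^{\mu}\cn_0', \Psi^2\Phi)$ with $\f_{\Psi} \sim \g$. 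The remaining task is to prove $\p^{\mu}\cn_0' = \cn$.

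To do this, I would twist $\g$ back by $\overline{\Psi}$ and compare with $\f$. Applying Proposition~\ref{proposition:eigentwist} to $\f_{\Psi} \sim \g$ gives $(\f_{\Psi})_{\overline{\Psi}} \sim \g_{\overline{\Psi}}$. The hypotheses imply $\nu > 2e(\Psi) \geq 2$ and $e(\Phi_{\cp}) < \nu$, so Lemma~\ref{lemma:vanishingcoeffs} yields $(\f_{\Psi})_{\overline{\Psi}} = \f$. Hence $\f \sim \g_{\overline{\Psi}}$, and Proposition~\ref{proposition:crudebound} places $\g_{\overline{\Psi}}$ in some space $\mathscr{S}_k(\mathcal{M}, \Phi)$. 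Since $\f$ is a newform of level $\cn$ equivalent to $\g_{\overline{\Psi}}$, the uniqueness of newforms in terms of their Hecke eigenvalues forces $\cn \mid \mathcal{M}$.

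Comparing prime-to-$\p$ parts of $\cn$ and $\mathcal{M}$ gives $\cn_0' = \cn_0$ at once. For the $\p$-part, the basic inequality $e(\Psi^2\Phi_{\cp}) \leq \max(e(\Psi), e(\Phi_{\cp}))$ bounds $\mbox{ord}_{\p}(\mathcal{M}) \leq \max(\mu, 2e(\Psi), e(\Psi) + e(\Phi_{\cp}))$. Both $2e(\Psi)$ and $e(\Psi) + e(\Phi_{\cp})$ are strictly less than $\nu$ by hypothesis, so the divisibility $\p^{\nu} \mid \mathcal{M}$ forces $\mu = \nu$, and we conclude $\g \in \mathscr{S}_k^+(\cn, \Psi^2\Phi)$.

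To wrap up, $\g$ is a newform of $\mathscr{S}_k^+(\cn, \Psi^2\Phi)$ sharing all Hecke eigenvalues with $\f_{\Psi}$. Any nonzero component of $\f_{\Psi}$ in $\mathscr{S}_k^-(\cn, \Psi^2\Phi)$ would be equivalent to a newform of strictly smaller level, contradicting the uniqueness of newforms applied to $\g$; so $\f_{\Psi} \in \mathscr{S}_k^+(\cn, \Psi^2\Phi)$, and Theorem~3.5 of \cite{shemanske-walling} forces $\f_{\Psi} = c\g$ for some $c \in \mathbb{C}^{\times}$, making $\f_{\Psi}$ itself a newform. The main obstacle is the careful bookkeeping of the $\p$-part of $\mathcal{M}$: the two hypotheses on $e(\Psi)$ and $e(\Phi_{\cp})$ are chosen precisely so that both $2e(\Psi)$ and $e(\Psi)+e(\Phi_{\cp})$ fall below $\nu$, which is exactly what forbids the level from dropping when we twist back.
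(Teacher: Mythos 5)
Your proof is correct, and in its second half it takes a genuinely different (and arguably cleaner) route than the paper. Both arguments begin identically: the level bound $\f_\Psi\in\mathscr{S}_k(\cn,\Psi^2\Phi)$, the existence of an equivalent newform $\g$ of level $\p^{\mu}\cn_0'$, the identity $\f_{\Psi\overline{\Psi}}=\f$ from Lemma \ref{lemma:vanishingcoeffs}, and the observation that $\f\sim\g_{\overline{\Psi}}$ forces the level of $\g_{\overline{\Psi}}$ to be divisible by $\cn$. Your quantity $\mathrm{ord}_{\p}(\mathcal{M})$ is exactly the paper's $L=\max\{\mu,\,e(\Phi_{\cp}\Psi^2)+e(\Psi),\,2e(\Psi)\}$, and your uniform bound $e(\Phi_{\cp}\Psi^2)+e(\Psi)\leq\max\{2e(\Psi),\,e(\Phi_{\cp})+e(\Psi)\}<\nu$ packages the paper's three-subcase analysis of Case 2 into one line. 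The divergence is in what you extract from $L\geq\nu$: the paper assumes $e(\Phi_{\cp}\Psi^2)=\mu$ for contradiction, concludes only that $e(\Phi_{\cp}\Psi^2)<\mu$, and then finishes via Theorem \ref{theorem:threethree}(3) and the annihilator operator ($C(\p,\g)=0$, so $\f_\Psi=\g\mid A_{\p}=\g$), with a separate argument to exclude $\mu=1$. You instead read off $\mu=\nu$ directly (since the other two terms in the max are strictly below $\nu$), after which $\f_\Psi$ and $\g$ live at the same level and multiplicity one (Theorem 3.5 of \cite{shemanske-walling}) plus normalization gives $\f_\Psi=\g$ at once — the same endgame the paper uses for the $\mu=\nu$ case of Proposition \ref{proposition:liesinnewformspace}. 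Your route avoids Theorem \ref{theorem:threethree}(3), the annihilator operator, and the $\mu=1$ edge case entirely; its only cost is that the claim $\cn_0'=\cn_0$ needs the extra remark (which you have in substance) that the prime-to-$\p$ part of $\mathcal{M}$ is exactly $\cn_0'$ because $\cond_{\Phi_{\cn_0}}$ divides the level of $\g$.
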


\begin{proof}
We begin by noting that our hypotheses imply that $\nu\geq 3$. By Proposition \ref{proposition:crudebound}, $\f_\Psi\in \mathscr{S}_k(\p^\nu {\mathcal{N}_0},\Psi^2\Phi)$. Since $\f_\Psi$ is an eigenfunction of all the Hecke operators $T_{\q}$ with $\q$ a prime not dividing $\cn$ by Proposition \ref{proposition:eigentwist}, there exists an ideal $\cn_0^\prime\mid \cn_0$, an integer $\mu$ satisfying $0\leq e(\Phi_{\cp}\Psi^2)\leq\mu\leq \nu$ and a newform $\g\in \mathscr{S}_k^+(\p^\mu {\mathcal{N}_0^\prime},\Psi^2\Phi)$ such that $\f_\Psi \sim \g$. An argument identical to the one used in Proposition \ref{proposition:liesinnewformspace} shows that $\cn_0^\prime=\cn_0$. 

We will show that $e(\Phi_{\cp}\Psi^2)<\mu$ by assuming that $e(\Phi_{\cp}\Psi^2)=\mu$ and deriving a contradiction. Let $L=\mbox{max}\{\mu,e(\Phi_{\cp}\Psi^2)+e(\Psi),2e(\Psi)\}$. As  $\f_\Psi \sim \g$, we have, by Lemma \ref{lemma:vanishingcoeffs}, $\f=\f_{\Psi\overline{\Psi}}\sim \g_{\overline{\Psi}}$ where $\g_{\overline{\Psi}} \in \mathscr{S}_k(\p^L {\mathcal{N}_0},\Phi)$ by Proposition \ref{proposition:crudebound}. Therefore $L\geq \nu$. We have three cases to consider.

Case 1: $L=2e(\Psi)$. In this case $2e(\Psi)\geq \nu$ implies that $e(\Psi)\geq \frac{\nu}{2}$, contradicting our hypothesis that $e(\Psi)<\frac{\nu}{2}$.

Case 2: $L=e(\Phi_{\cp}\Psi^2)+e(\Psi)$. We have three subcases to consider. First suppose that $e(\Phi_{\cp})>e(\Psi)$. Then $e(\Phi_{\cp}\Psi^2)=e(\Phi_{\cp})$, hence $L\geq \nu$ implies that $e(\Phi_{\cp})+e(\Psi)\geq \nu$, contradicting our hypothesis that   $e(\Phi_{\cp})+e(\Psi)<\nu$. If $e(\Psi)>e(\Phi_{\cp})$, then $e(\Psi)\geq e(\Phi_{\cp}\Psi^2)$, hence $L\geq \nu$ implies that $2e(\Psi)\geq \nu$, which we have already seen results in a contradiction. Finally, suppose that $e(\Phi_{\cp})=e(\Psi)$. Then $e(\Psi)<\frac{\nu}{2}$ implies that $e(\Phi_{\cp})<\frac{\nu}{2}$ and consequently that $e(\Phi_{\cp}\Psi^2)<\frac{\nu}{2}$. But this means that $L = e(\Phi_{\cp}\Psi^2)+e(\Psi)<\nu$, contradicting the fact that $L\geq \nu$.

Case 3: $L=\mu$. This case cannot occur as we have assumed that $e(\Phi_{\cp}\Psi^2)=\mu$, meaning that $e(\Phi_{\cp}\Psi^2)+e(\Psi)>\mu$ by the non-triviality of $\Psi$.

We conclude that $e(\Phi_{\cp}\Psi^2)<\mu$ . Suppose first that $\mu>1$. Then Theorem \ref{theorem:threethree}(3) implies that $c(\p,\g)=0$. As in the proof of Proposition \ref{proposition:liesinnewformspace} we may easily show that $\f_\Psi=\g\mid A_{\p}$. But we've just shown that $\g\mid A_{\p}=\g$. Therefore $\f_\Psi$ is a newform and we're done.

We show that the case $\mu=1$ cannot occur. Indeed, suppose that $\mu=1$ (and hence $e(\Phi_{\cp}\Psi^2)=0$). Then $\g$ is a newform of $\mathscr{S}_k(\p{\mathcal{N}_0},\Phi)$. As $\f_\Psi\sim \g$, we also have $\f_{\Psi\overline{\Psi}}\sim \g_{\overline{\Psi}}$. Our hypotheses imply that $\nu\geq 3$, so Lemma \ref{lemma:vanishingcoeffs} implies that $\f=\f_{\Psi\overline{\Psi}}$; hence $\f\sim \g_{\overline{\Psi}}$. Theorem 6.1 of \cite{shemanske-walling} implies that $\g_{\overline{\Psi}}$ is a newform of $\mathscr{S}_k(\p^{2e(\Psi)}{\mathcal{N}_0},\Phi)$, hence Theorem 3.5 of \cite{shemanske-walling} implies that in fact we have $\f=\g_{\overline{\Psi}}$. By comparing the levels of $\f$ and $\g_{\overline{\Psi}}$, we see that this means that $2e(\Psi)=\nu$; i.e. $e(\Psi)=\frac{\nu}{2}$. We assumed that  $e(\Psi)<\frac{\nu}{2}$ however, so we obtain a contradiction, finishing our proof.\end{proof}

\begin{theorem}\label{theorem:innertwist} If $e(\Phi_{\cp})<\nu$ then 
$
\mathscr{S}_k^+(\cn,\Phi)=\mathscr{S}_k^+(\cn,\overline{\Phi}_{\cp}\Phi_{{\mathcal{N}_0}})^{\Phi_{\cp}}.
$

If $e(\Phi_{\cp})=\nu$ and $\f$ is a normalized newform in $\mathscr{S}_k^+(\cn,\overline{\Phi}_{\cp}\Phi_{\cn_0})$, then $$\textbf{f}_{\Phi_{\cp}}=\g-C(\p,\g)\cdot\g\mid B_{\p}$$ for some normalized newform $\g$ in $\mathscr{S}_k^+(\cn,\Phi)$. 
\end{theorem}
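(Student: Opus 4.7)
The plan is to treat the two cases separately. For the first case $e(\Phi_{\cp})<\nu$, the claim is an equality of spaces each spanned by newforms, so it suffices to establish a bijection between newforms on the two sides via twisting. The case $e(\Phi_{\cp})=0$ is trivial since $\Phi_{\cp}$ is then trivial and the two characters coincide. Otherwise $\nu\geq 2$, and given a normalized newform $\f\in\mathscr{S}_k^+(\cn,\Phi)$, Proposition~\ref{proposition:liesinnewformspace} produces a normalized newform $\f_{\overline{\Phi}_{\cp}}\in\mathscr{S}_k^+(\cn,\overline{\Phi}_{\cp}\Phi_{\cn_0})$, and Lemma~\ref{lemma:vanishingcoeffs} gives $\f=\f_{\overline{\Phi}_{\cp}\Phi_{\cp}}=(\f_{\overline{\Phi}_{\cp}})_{\Phi_{\cp}}$, exhibiting $\f$ as an element of $\mathscr{S}_k^+(\cn,\overline{\Phi}_{\cp}\Phi_{\cn_0})^{\Phi_{\cp}}$. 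The reverse inclusion is symmetric: the character $\overline{\Phi}_{\cp}\Phi_{\cn_0}$ has $\cp$-part $\overline{\Phi}_{\cp}$ with the same exponential conductor, so a second application of Proposition~\ref{proposition:liesinnewformspace} shows that twisting a newform in $\mathscr{S}_k^+(\cn,\overline{\Phi}_{\cp}\Phi_{\cn_0})$ by $\Phi_{\cp}$ yields a newform in $\mathscr{S}_k^+(\cn,\Phi)$.

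For the second case $e(\Phi_{\cp})=\nu$, let $\f$ be a normalized newform in $\mathscr{S}_k^+(\cn,\overline{\Phi}_{\cp}\Phi_{\cn_0})$. Since $\mathrm{ord}_\p(\mathfrak{f}_{\overline{\Phi}_{\cp}})=\nu=\mathrm{ord}_\p(\cp)$, Proposition~\ref{proposition:levelbound} gives $\f_{\Phi_{\cp}}\in\mathscr{S}_k(\p\cn,\Phi)$, and Proposition~\ref{proposition:eigentwist} shows that $\f_{\Phi_{\cp}}$ is a $T_\q$-eigenform for every $\q\nmid\cp$, in particular for all $\q\nmid\cn$. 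Hence there exists a newform $\g\in\mathscr{S}_k^+(\cn'',\Phi)$ with $\cn''\mid\p\cn$ and $\f_{\Phi_{\cp}}\sim\g$. Writing $\cn''=\p^\mu\cn_0''$ with $\cn_0''\mid\cn_0$ and $\nu\leq\mu\leq\nu+1$ (the lower bound because $\cp$ must divide $\cn''$), the relation $\f=(\f_{\Phi_{\cp}})_{\overline{\Phi}_{\cp}}\sim\g_{\overline{\Phi}_{\cp}}$ combined with Proposition~\ref{proposition:crudebound} (which preserves the $\cn_0$-part of the level under $\cp$-primary twists) forces $\cn_0\mid\cn_0''$ and therefore $\cn_0''=\cn_0$. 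The critical step, which I expect to be the main obstacle, is ruling out $\mu=\nu+1$: if this held, then $\f_{\Phi_{\cp}}=\g$ would be a normalized newform in $\mathscr{S}_k^+(\p\cn,\Phi)$, and applying Proposition~\ref{proposition:liesinnewformspace} at the level $\p\cn$ (where the role of $\nu$ is played by $\nu+1$ and $e(\Phi_{\cp})=\nu<\nu+1$) would exhibit $\f=(\f_{\Phi_{\cp}})_{\overline{\Phi}_{\cp}}$ as a newform in $\mathscr{S}_k^+(\p\cn,\overline{\Phi}_{\cp}\Phi_{\cn_0})$, contradicting the fact that $\f$ is already a newform at the strictly smaller level $\cn$. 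Therefore $\mu=\nu$ and $\g\in\mathscr{S}_k^+(\cn,\Phi)$.

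Finally, by the oldform structure underlying Theorem~3.5 of \cite{shemanske-walling}, the space of elements in $\mathscr{S}_k(\p\cn,\Phi)$ which are $\sim$-equivalent to $\g$ is two-dimensional and spanned by $\g$ and $\g\mid B_\p$, so one may write $\f_{\Phi_{\cp}}=c_{\mathcal{O}}\g+c_\p\g\mid B_\p$. Using the identity $C(\mathfrak{m},\f_{\Phi_{\cp}})=\Phi_{\cp}^*(\mathfrak{m})C(\mathfrak{m},\f)$ from Proposition~\ref{proposition:crudebound} together with $\Phi_{\cp}^*(\p)=0$ (since $\p\mid\mathfrak{f}_{\Phi_{\cp}}$), $C(\mathcal{O},\g\mid B_\p)=0$, and $C(\p,\g\mid B_\p)=C(\mathcal{O},\g)=1$, comparing the $\mathcal{O}$- and $\p$-th Fourier coefficients reads off $c_{\mathcal{O}}=1$ and $c_\p=-C(\p,\g)$, completing the proof.
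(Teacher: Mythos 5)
Your first case and the overall architecture of your second case match the paper's proof, but there is one genuine problem: the identity $\f=(\f_{\Phi_{\cp}})_{\overline{\Phi}_{\cp}}$, which you invoke twice in the second case, is false there. The double twist satisfies $C(\mathfrak{m},(\f_{\Phi_{\cp}})_{\overline{\Phi}_{\cp}})=\overline{\Phi}_{\cp}^*(\mathfrak{m})\Phi_{\cp}^*(\mathfrak{m})C(\mathfrak{m},\f)$, so it equals $\f\mid A_{\p}=\f-C(\p,\f)\cdot\f\mid B_{\p}$, and this recovers $\f$ only when $C(\p,\f)=0$; that is exactly the content of Lemma \ref{lemma:vanishingcoeffs}, whose hypothesis fails here. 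Indeed $\f$ has character $\overline{\Phi}_{\cp}\Phi_{\cn_0}$ with $e(\overline{\Phi}_{\cp})=\nu$, so Theorem \ref{theorem:threethree}(2) gives $C(\p,\f)\neq 0$ and hence $(\f_{\Phi_{\cp}})_{\overline{\Phi}_{\cp}}=\f\mid A_{\p}\neq\f$. (This nonvanishing is precisely what distinguishes the $e(\Phi_{\cp})=\nu$ case and why the conclusion is $\f_{\Phi_{\cp}}=\g-C(\p,\g)\cdot\g\mid B_{\p}$ rather than ``$\f_{\Phi_{\cp}}$ is a newform.'') In your first use of the identity, to get $\cn_0''=\cn_0$, the error is harmless since only $\f\sim(\f_{\Phi_{\cp}})_{\overline{\Phi}_{\cp}}$ is needed. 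In the second use it breaks the contradiction as you state it: applying Proposition \ref{proposition:liesinnewformspace} at level $\p\cn$ exhibits $\f\mid A_{\p}$, not $\f$, as a newform of $\mathscr{S}_k^+(\p\cn,\overline{\Phi}_{\cp}\Phi_{\cn_0})$. The step is repairable --- $\f\mid A_{\p}\sim\f$, and a newform of level $\p\cn$ cannot be equivalent to the newform $\f$ of strictly smaller level $\cn$ (it would have to equal the oldform $\f$, contradicting the orthogonality of $\mathscr{S}_k^+$ and $\mathscr{S}_k^-$, or simply compare $\p$-th coefficients) --- but as written it does not go through. For comparison, the paper rules out $\mu=\nu+1$ by noting that $\f_{\Phi_{\cp}}=\g$ would be a newform, while $C(\p,\f)\neq 0$ together with Corollary 6.4 of \cite{shemanske-walling} shows $\f_{\Phi_{\cp}}$ is not a newform of any level.

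Your remaining deviations are fine. In particular, the paper obtains the final formula by expanding $\g_{\overline{\Phi}_{\cp}}=c_1\f+c_2\f\mid B_{\p}$ and twisting back by $\Phi_{\cp}$, whereas you expand $\f_{\Phi_{\cp}}=c_{\mathcal{O}}\g+c_{\p}\g\mid B_{\p}$ directly and compare the $\mathcal{O}$- and $\p$-th coefficients; both rest on the same oldform decomposition and both are correct.
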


\begin{proof}
When $K=\mathbb Q$ this is Corollary 3.4 of \cite{HPS}.

Note first that the theorem is vacuously true when $e(\Phi_{\cp})=0$. We therefore assume that $e(\Phi_{\cp})\geq 1$. As a consequence, $\nu\geq 2$.

Let $\f\in \mathscr{S}_k^+(\cn,\overline{\Phi}_{\cp}\Phi_{{\mathcal{N}_0}})$ be a newform. Applying Proposition \ref{proposition:liesinnewformspace} shows that $\f_{\Phi_{\cp}}\in \mathscr{S}_k^+(\cn,\Phi)$ is a newform. As $\mathscr{S}_k^+(\cn,\overline{\Phi}_{\cp}\Phi_{\cn_0})$ is generated by newforms, we have the inclusion 

\begin{equation}\label{equation:thm1star}
\mathscr{S}_k^+(\cn,\overline{\Phi}_{\cp}\Phi_{{\mathcal{N}_0}})^{\Phi_{\cp}}\subset \mathscr{S}_k^+(\cn,\Phi).
\end{equation}

Now let $\f\in \mathscr{S}_k^+(\cn,\Phi)$. Then as above $\f_{\overline{\Phi}_{\cp}} \in \mathscr{S}_k^+(\cn,\overline{\Phi}_{\cp}\Phi_{{\mathcal{N}_0}})$ (by interchanging $\Phi_{\cp}$ and $\overline{\Phi}_{\cp}$ in equation \ref{equation:thm1star}), hence $\f_{\overline{\Phi}_{\cp}\Phi_{\cp}}\in \mathscr{S}_k^+(\cn,\overline{\Phi}_{\cp}\Phi_{{\mathcal{N}_0}})^{\Phi_{\cp}}$. This gives us the chain of inclusions

\begin{displaymath}
\mathscr{S}_k^+(\cn,\Phi)^{\overline{\Phi}_{\cp}\Phi_{\cp}}\subset \mathscr{S}_k^+(\cn,\overline{\Phi}_{\cp}\Phi_{{\mathcal{N}_0}})^{\Phi_{\cp}}\subset \mathscr{S}_k^+(\cn,\Phi).
\end{displaymath}

Lemma \ref{lemma:vanishingcoeffs} shows that $\mathscr{S}_k^+(\cn,\Phi)^{\overline{\Phi}_{\cp}\Phi_{\cp}}=\mathscr{S}_k^+(\cn,\Phi)$, and it follows that $$\mathscr{S}_k^+(\cn,\Phi)=\mathscr{S}_k^+(\cn,\overline{\Phi}_{\cp}\Phi_{{\mathcal{N}_0}})^{\Phi_{\cp}}.$$

We now prove the second assertion. Suppose that $e(\Phi_{\cp})=\nu$. First note that by Proposition \ref{proposition:levelbound}, $\f_{\Phi_{\cp}}\in \mathscr{S}_k(\p^{\nu+1}\cn_0,\Phi)$.
By Proposition \ref{proposition:eigentwist}, $\textbf{f}_{\Phi_{\cp}}$ is a Hecke eigenform for all $T_{\q}$ with $\q$ a prime not dividing $\cn$. Thus there exists an integer $\mu$ with $e(\Phi_{\cp})=\nu\leq \mu\leq \nu+1$ and a normalized newform $\g\in \mathscr{S}_k^+(\p^{\mu}\cn_0,\Phi)$ such that $\textbf{f}_{\Phi_{\cp}}\sim\g$. We claim that the case $\mu=\nu+1$ cannot occur. Indeed, if $\mu=\nu+1$ then $\g$ and $\textbf{f}_{\Phi_{\cp}}$ would both lie in $\mathscr{S}_k^+(\p^{\nu+1}\cn_0,\Phi)$ and our remarks at the end of Section \ref{section:prelims} would imply that $\textbf{f}_{\Phi_{\cp}}=\g$ is a newform. But Theorem \ref{theorem:threethree} shows that $C(\p,\f)\neq 0$, so that Corollary 6.4 of \cite{shemanske-walling} implies that  $\textbf{f}_{\Phi_{\cp}}$ is not a newform of any level. This contradiction allows us to conclude that $\mu=\nu$. It then follows from Proposition \ref{proposition:levelbound} that $\g_{\overline{\Phi}_{\cp}}\in\mathscr{S}_k(\p^{\nu+1}\cn_0,\overline{\Phi}_{\cp}\Phi_{\cn_0})$. Using the fact that $\g$ is an eigenform of $T_{\p}$ (as follows from Theorem 3.5 of \cite{shemanske-walling}), we see that  

$$\g-C(\p,\g)\cdot\g\mid B_{\p}=\g-\g\mid T_{\p}\mid B_{\p} = (\textbf g_{\overline{\Phi}_{\cp}})_{\Phi_{\cp}}=\left( c_1\f + c_2 \f\mid B_{\p} \right)_{\Phi_{\cp}}=c_1 \textbf f_{\Phi_{\cp}}$$

Comparing Fourier coefficients yields $c_1=1$.\end{proof}

\begin{theorem}\label{theorem:twocharacters}
If $0<e(\Psi)< \frac{\nu}{2}$ and $e(\Phi_{\cp})+e(\Psi)< \nu$ then $$\mathscr{S}_k^+(\cn,\Phi)^\Psi=\mathscr{S}_k^+(\cn,\Psi^2\Phi).$$
\end{theorem}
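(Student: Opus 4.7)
The plan is to establish both inclusions at the level of normalized newforms and then invoke the fact that newforms generate $\mathscr{S}_k^+(\cn,\Phi)$ and $\mathscr{S}_k^+(\cn,\Psi^2\Phi)$. The forward inclusion $\mathscr{S}_k^+(\cn,\Phi)^\Psi \subseteq \mathscr{S}_k^+(\cn,\Psi^2\Phi)$ is immediate: the hypotheses $0<e(\Psi)<\nu/2$ and $e(\Phi_\cp)+e(\Psi)<\nu$ are exactly those of Proposition \ref{proposition:liesinnewformspacearbitrary}, so any normalized newform $\f\in\mathscr{S}_k^+(\cn,\Phi)$ twists to a newform $\f_\Psi\in\mathscr{S}_k^+(\cn,\Psi^2\Phi)$, and the inclusion follows by linearity.

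For the reverse inclusion I would re-apply Proposition \ref{proposition:liesinnewformspacearbitrary} after swapping $\Phi$ with $\Psi^2\Phi$ and $\Psi$ with $\overline{\Psi}$. The hypothesis on the twisting character is unchanged since $e(\overline{\Psi})=e(\Psi)$. The substantive check is that $e((\Psi^2\Phi)_\cp)+e(\overline{\Psi})<\nu$. This rests on the standard local-field inequality $e(\chi_1\chi_2)\leq\max\{e(\chi_1),e(\chi_2)\}$, applied to $\Psi^2\Phi_\cp$, which gives $e((\Psi^2\Phi)_\cp)\leq\max\{e(\Psi),e(\Phi_\cp)\}$, whence
\[
e((\Psi^2\Phi)_\cp)+e(\Psi)\leq\max\{2e(\Psi),\, e(\Phi_\cp)+e(\Psi)\}<\nu,
\]
using $e(\Psi)<\nu/2$ and $e(\Phi_\cp)+e(\Psi)<\nu$. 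Granted this, for any normalized newform $\f'\in\mathscr{S}_k^+(\cn,\Psi^2\Phi)$ the proposition delivers a newform $\f'_{\overline{\Psi}}\in\mathscr{S}_k^+(\cn,\overline{\Psi}^2\Psi^2\Phi)=\mathscr{S}_k^+(\cn,\Phi)$.

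To close the loop and conclude $\f'\in\mathscr{S}_k^+(\cn,\Phi)^\Psi$, I would apply Lemma \ref{lemma:vanishingcoeffs} to $\f'$ with the character $\Psi^2\Phi$ playing the role of $\Phi$. Its hypotheses are met: $\nu\geq 3$ since $1\leq e(\Psi)<\nu/2$, and $e((\Psi^2\Phi)_\cp)<\nu$ was verified above. The lemma therefore yields $\f'=\f'_{\overline{\Psi}\Psi}=(\f'_{\overline{\Psi}})_\Psi$, realizing $\f'$ as the $\Psi$-twist of a newform of $\mathscr{S}_k^+(\cn,\Phi)$ and finishing the reverse inclusion. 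The only real (and still modest) obstacle I anticipate is the conductor bookkeeping in the second paragraph, specifically the borderline subcase in which $e(\Psi^2)=e(\Phi_\cp)$ and cancellation may force the conductor of $\Psi^2\Phi_\cp$ strictly below the maximum; this requires only a brief case split and no additional input beyond the inequalities already assumed.
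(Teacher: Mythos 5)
Your proposal is correct and follows essentially the same route as the paper: both inclusions come from Proposition \ref{proposition:liesinnewformspacearbitrary} (applied once to $(\Phi,\Psi)$ and once to $(\Psi^2\Phi,\overline{\Psi})$), together with the key conductor estimate $e(\Psi^2\Phi_{\cp})+e(\Psi)<\nu$ and Lemma \ref{lemma:vanishingcoeffs} to undo the double twist. Your one-line bound $e(\Psi^2\Phi_{\cp})\leq\max\{e(\Psi),e(\Phi_{\cp})\}$ cleanly replaces the paper's case analysis, and the ``borderline cancellation'' you worry about needs no extra case split, since cancellation can only lower the conductor and thus only strengthens the inequality.
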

\begin{proof}
When $K=\mathbb Q$ this is Theorem 3.12 of \cite{HPS}.
We begin by noting that our hypotheses imply that $\nu\geq 3$. Let $\f\in \mathscr{S}_k^+(\cn,\Phi)$ be a newform. By Proposition \ref{proposition:liesinnewformspacearbitrary}, $\f_\Psi\in \mathscr{S}_k^+(\cn,\Psi^2\Phi)$ is a newform. As $\mathscr{S}_k^+(\cn,\Phi)$ is generated by newforms, we have the inclusion 

\begin{equation}\label{equation:thm2star}
\mathscr{S}_k^+(\cn,\Phi)^\Psi \subset \mathscr{S}_k^+(\cn,\Psi^2\Phi).
\end{equation}

Twisting by $\overline{\Psi}$ yields:

\begin{equation}\label{equation:thm2starstar}
\mathscr{S}_k^+(\cn,\Phi)^{\Psi\overline{\Psi}} \subset \mathscr{S}_k^+(\cn,\Psi^2\Phi)^{\overline{\Psi}}.
\end{equation}

We claim that $e(\Psi^2\Phi_{\cp})+e(\Psi)<\nu$. We have two cases to consider.

Case 1: $e(\Phi_{\cp})<\frac{\nu}{2}$ - By hypothesis $e(\Psi)< \frac{\nu}{2}$. Therefore $e(\Psi^2\Phi_{\cp})<\frac{\nu}{2}$, hence $e(\Psi^2\Phi_{\cp})+e(\Psi)<\nu$.

Case 2: $e(\Phi_{\cp})\geq \frac{\nu}{2}$ - We have two subcases to consider. Suppose first that $e(\Phi_{\cp})>e(\Psi^2)$. Then $e(\Psi^2\Phi_{\cp})=e(\Phi_{\cp})<\nu-e(\Psi)$. Now suppose that $e(\Phi_{\cp})\leq e(\Psi^2)$. Then $e(\Phi_{\cp})\leq e(\Psi^2)\leq e(\Psi)< \frac{\nu}{2}$. But Case 2 assumes that $e(\Phi_{\cp})\geq \frac{\nu}{2}$, so this subcase cannot occur and we have shown our claim.

Having shown that $e(\Psi^2\Phi_{\cp})+e(\Psi)<\nu$, we apply Theorem 5.7 of \cite{shemanske-walling} and Proposition \ref{proposition:liesinnewformspacearbitrary} to show that
\begin{equation}\label{equation:thm2starstarstar}
\mathscr{S}_k^+(\cn,\Psi^2\Phi)^{\overline{\Psi}} \subset \mathscr{S}_k^+(\cn,\Phi).
\end{equation}

Combining equations (\ref{equation:thm2starstar}) and (\ref{equation:thm2starstarstar}) gives us the chain of inclusions:

\begin{displaymath}
\mathscr{S}_k^+(\cn,\Phi)^{\Psi\overline{\Psi}} \subset \mathscr{S}_k^+(\cn,\Psi^2\Phi)^{\overline{\Psi}}  \subset \mathscr{S}_k^+(\cn,\Phi).
\end{displaymath}

Lemma \ref{lemma:vanishingcoeffs} implies that $ \mathscr{S}_k^+(\cn,\Phi)= \mathscr{S}_k^+(\cn,\Psi^2\Phi)^{\overline{\Psi}}$.

Twisting by $\Psi$ then yields:

$$\mathscr{S}_k^+(\cn,\Phi)^\Psi=\mathscr{S}_k^+(\cn,\Psi^2\Phi)^{\overline{\Psi}\Psi}.$$

As $e(\Psi^2\Phi_{\cp})<\nu$, Lemma \ref{lemma:vanishingcoeffs} shows that $\mathscr{S}_k^+(\cn,\Psi^2\Phi)^{\overline{\Psi}\Psi}=\mathscr{S}_k^+(\cn,\Psi^2\Phi)$, finishing the proof.\end{proof}

\begin{theorem} \label{theorem:primitivesum}
If $\frac{\nu}{2}<e(\Phi_{\cp})<\nu$ then $$\mathscr{S}_k^+(\cn,\Phi)=\bigoplus_{e(\Psi)=\nu-e(\Phi_{\cp})} \mathscr{S}_k^+(\mathfrak{p}^{e(\Phi_{\cp})}{\mathcal{N}_0},\Psi^2\Phi )^{\overline{\Psi}},$$ where the sum $\bigoplus_{e(\Psi)=\nu-e(\Phi_{\cp})} $ is taken over all Hecke characters $\Psi$ with conductor $\p^{\nu-e(\Phi_{\cp})}$ and infinite part $\Psi_{\infty}(a)=\mbox{sgn}(a)^l$ for $l\in\mathbb Z^n$ and $a\in K_{\infty}^\times$.
\end{theorem}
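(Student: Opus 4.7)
The plan is to establish the decomposition in three stages: (i) each summand $W_\Psi := \mathscr{S}_k^+(\p^{e(\Phi_\cp)}\cn_0, \Psi^2\Phi)^{\overline{\Psi}}$ is contained in $V := \mathscr{S}_k^+(\cn, \Phi)$; (ii) the sum $\sum_\Psi W_\Psi$ is direct; (iii) this sum in fact exhausts $V$. Throughout I abbreviate $e := e(\Phi_\cp)$, so that $\nu/2 < e < \nu$ and every $\Psi$ in the sum has $e(\Psi) = \nu - e < e$. The recurring local calculation is that $e((\Psi^2\Phi)_\cp) = e$, since $e(\Psi^2) \leq \nu - e < e$; thus in the right-hand summand the conductor of the character matches the $\p$-valuation of the level, so Theorem \ref{theorem:threethree}(2) forces $C(\p, \g) \neq 0$ for every normalized newform $\g$ of $\mathscr{S}_k^+(\p^e \cn_0, \Psi^2\Phi)$.

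For (i), I would follow the template of Propositions \ref{proposition:liesinnewformspace} and \ref{proposition:liesinnewformspacearbitrary}. Let $\g$ be a normalized newform in $\mathscr{S}_k^+(\p^e \cn_0, \Psi^2\Phi)$. Proposition \ref{proposition:crudebound} places $\g_{\overline{\Psi}}$ in $\mathscr{S}_k(\cl, \Phi)$, where the $\p$-exponent of $\cl$ is $\max\{e,\, e + (\nu - e),\, 2(\nu - e)\} = \nu$ (using $\nu - e < \nu/2$), so $\g_{\overline{\Psi}} \in \mathscr{S}_k(\cn, \Phi)$. By Proposition \ref{proposition:eigentwist} it is a simultaneous $T_\q$-eigenform for $\q \nmid \cn$, and hence is equivalent to a normalized newform $\textbf{h} \in \mathscr{S}_k^+(\p^\mu \cn_0, \Phi)$ for some $e \leq \mu \leq \nu$ (the non-$\p$ part of the level is pinned to $\cn_0$ by the standard argument from Proposition \ref{proposition:liesinnewformspace}). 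Ruling out $\mu < \nu$ will be the main technical step: untwisting $\textbf{h} \sim \g_{\overline{\Psi}}$ by $\Psi$ gives $\textbf{h}_\Psi \sim \g$, and since $C(\p, \textbf{h}_\Psi) = \Psi^*(\p) C(\p, \textbf{h}) = 0$, a Fourier-coefficient comparison forces $\textbf{h}_\Psi = \g - C(\p, \g)\, \g \mid B_\p$; then Theorem 6.1 and Corollary 6.4 of \cite{shemanske-walling} (used exactly as in the final paragraphs of Proposition \ref{proposition:liesinnewformspacearbitrary}) pin down the exact level of $\textbf{h}_\Psi$ and produce a contradiction unless $\mu = \nu$.

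For (ii), the Hecke eigenvalue of $\g_{\overline{\Psi}}$ at a prime $\q \nmid \cn$ is $\overline{\Psi}^*(\q) C(\q, \g)$. Distinct $\Psi_1, \Psi_2$ in the sum have distinct ideal characters $\overline{\Psi}_1^* \neq \overline{\Psi}_2^*$, so newforms in $W_{\Psi_1}$ and $W_{\Psi_2}$ carry distinct $T_\q$-eigensystems; by Theorem 3.5 of \cite{shemanske-walling} the $\mathbb{C}$-spans of the newform bases of distinct $W_\Psi$'s are disjoint, and the sum is direct.

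For (iii), let $\f \in V$ be a newform, so $C(\p, \f) = 0$ by Theorem \ref{theorem:threethree}(3). For each candidate $\Psi$, the twist $\f_\Psi$ sits in $\mathscr{S}_k(\cn, \Psi^2\Phi)$ by the same lcm calculation and is a $T_\q$-eigenform; it is therefore equivalent to a normalized newform $\g_\Psi \in \mathscr{S}_k^+(\p^{\mu_\Psi}\cn_0, \Psi^2\Phi)$ with $e \leq \mu_\Psi \leq \nu$. The goal is to show that for some (necessarily unique by (ii)) $\Psi$ one has $\mu_\Psi = e$, for then a short Fourier-coefficient check shows $\f_\Psi = \g_\Psi - C(\p,\g_\Psi)\,\g_\Psi \mid B_\p$, and twisting back by $\overline{\Psi}$ gives $\f = (\f_\Psi)_{\overline{\Psi}} = (\g_\Psi)_{\overline{\Psi}} \in W_\Psi$. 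This is the boundary case $e(\Phi_\cp) + e(\Psi) = \nu$ of Proposition \ref{proposition:liesinnewformspacearbitrary} and will be the hardest part of the proof; I expect it to follow from a further application of Theorem 6.1 of \cite{shemanske-walling}, which should determine the exact level of $\f_\Psi$ from the local behavior of $\Psi$ against the $\p$-component of $\f$, selecting the unique $\Psi$ for which the drop to level $\p^e \cn_0$ actually occurs.
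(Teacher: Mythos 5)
Your overall architecture (containment of each summand, directness of the sum, exhaustion of $\mathscr{S}_k^+(\cn,\Phi)$) matches the paper's, and your step (i) is workable, though the paper gets it more cheaply: since $e((\Psi^2\Phi)_{\cp})=e(\Phi_{\cp})$ equals the $\p$-valuation of the level $\p^{e(\Phi_{\cp})}\cn_0$, Theorem \ref{theorem:threethree}(2) gives $C(\p,\g)\neq 0$, and then Theorems 5.7 and 6.3 of \cite{shemanske-walling} say directly that $\g_{\overline{\Psi}}$ is a newform of level $\cn$, with no need to introduce an auxiliary newform $\textbf{h}$ and laboriously rule out $\mu<\nu$.

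The genuine gap is step (iii), which you correctly flag as the hardest part but do not actually prove. The tool you reach for, Theorem 6.1 of \cite{shemanske-walling}, governs the exact level of a twist of a newform with \emph{nonvanishing} $\p$-th coefficient; here $C(\p,\f)=0$ by Theorem \ref{theorem:threethree}(3), so it does not apply, and nothing in your sketch produces a character $\Psi$ with $\mu_\Psi=e(\Phi_{\cp})$. The needed existence statement is precisely Theorem 6.8 of \cite{shemanske-walling} --- that a newform of level $\cn$ with $\frac{\nu}{2}<e(\Phi_{\cp})<\nu$ is the $\overline{\Psi}$-twist of a newform of level $\p^{e(\Phi_{\cp})}\cn_0$ for some $\Psi$ of conductor $\p^{\nu-e(\Phi_{\cp})}$ --- and the paper's proof of this inclusion consists of citing it; it is a substantial result, not something extractable from the level formulas alone. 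A secondary gap sits in step (ii): from $\overline{\Psi}_1^*\neq\overline{\Psi}_2^*$ it does not follow formally that the eigensystems of $(\g_1)_{\overline{\Psi}_1}$ and $(\g_2)_{\overline{\Psi}_2}$ differ, since a priori one could have $\g_1\sim(\g_2)_{\Psi_1\overline{\Psi}_2}$. The paper rules this out by applying the annihilator $A_{\p}$ to get $\g_1\mid A_{\p}=(\g_2)_{\overline{\Psi}_2\Psi_1}$, invoking Corollary 6.4 of \cite{shemanske-walling} (legitimate because $C(\p,\g_2)\neq 0$ and $e(\overline{\Psi}_2\Psi_1)<e(\Phi_{\cp})$) to see the right-hand side is a newform, concluding $\g_1=\g_1\mid A_{\p}$ and hence $C(\p,\g_1)=0$, contradicting Theorem \ref{theorem:threethree}(2); it then separately proves linear independence of the resulting distinct newforms by the usual $T_{\q}-C(\q,\textbf{h}_1)\,\mbox{Id}$ argument. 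You should supply both of these arguments rather than assert the eigensystems are distinct.
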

\begin{proof} When $K=\mathbb{Q}$ this is Theorem 3.9 of \cite{HPS}.

We begin by noting that our hypothesis $\frac{\nu}{2}<e(\Phi_{\cp})<\nu$ implies that $\nu\geq 2$. By Theorem \ref{theorem:threethree}(3) above and Theorem 6.8 of \cite{shemanske-walling} we have the inclusion $$\mathscr{S}_k^+(\cn,\Phi)\subset \sum_{e(\Psi)=\nu-e(\Phi_{\cp})} \mathscr{S}_k^+(\mathfrak{p}^{e(\Phi_{\cp})}{\mathcal{N}_0},\Psi^2\Phi)^{\overline{\Psi}}.$$ Our strategy to complete the proof will be to prove the reverse inclusion and then show that the sum is direct.

Let $\Psi$ be a Hecke character with conductor $\p^{\nu-e(\Phi_{\cp})}$ and infinite part $\Psi_{\infty}(a)=\mbox{sgn}(a)^l$,
 and let $\f\in \mathscr{S}_k^+(\mathfrak{p}^{e(\Phi_{\cp})}{\mathcal{N}_0},\Psi^2\Phi)$ be a newform. By Theorem 5.7 of \cite{shemanske-walling} we have $\f_{\overline{\Psi}}\in \mathscr{S}_k(\cn,\Phi)$ where $\cn$ is the exact level of $\f_{\overline{\Psi}}$. By Theorem \ref{theorem:threethree}(2), $C(\p,\f)\neq 0$, so by Theorem 6.3 of \cite{shemanske-walling}, $\f_{\overline{\Psi}}$ is a newform. Therefore for all $\p$-primary Hecke characters $\Psi$ with $e(\Psi)=\nu-e(\Phi_{\cp})$ we have the inclusion $$\mathscr{S}_k^+(\mathfrak{p}^{e(\Phi_{\cp})}{\mathcal{N}_0},\Psi^2\Phi)^{\overline{\Psi}}\subset \mathscr{S}_k^+(\cn,\Phi).$$ We have therefore shown that \begin{equation}\label{equation:sum}\mathscr{S}_k^+(\cn,\Phi)= \sum_{e(\Psi)=\nu -e(\Phi_{\cp})} \mathscr{S}_k^+(\mathfrak{p}^{e(\Phi_{\cp})}{\mathcal{N}_0},\Psi^2\Phi)^{\overline{\Psi}}.\end{equation}

It therefore remains only to show that the sum on the right hand side of equation \ref{equation:sum} is direct. We do this by showing that $$\mbox{dim}(\mathscr{S}_k^+(\cn,\Phi))=\sum_{e(\Psi)=\nu -e(\Phi_{\cp})} \mbox{ dim} (\mathscr{S}_k^+(\mathfrak{p}^{e(\Phi_{\cp})}{\mathcal{N}_0},\Psi^2\Phi)^{\overline{\Psi}}).$$

Given a Hecke character $\Psi$ with $e(\Psi)=\nu-e(\Phi_{\cp})$ and infinite part $\Psi_{\infty}(a)=\mbox{sgn}(a)^l$, fix a basis $S_{\Psi}$ of $\mathscr{S}_k^+(\mathfrak{p}^{e(\Phi_{\cp})}{\mathcal{N}_0},\Psi^2\Phi)$ consisting of normalized newforms $\f_1,\dots,\f_n$.

Define $$S=\bigcup_{\Psi} \{\textbf{f}_{\overline{\Psi}} : \f\in S_{\Psi} \}.$$

We have already shown that the elements of $S$ are all newforms of $\mathscr{S}_k^+(\cn,\Phi)$ and in fact span the space. It therefore suffices to show 
\begin{enumerate}
\item The (distinct) elements of $S$ are linearly independent
\item $\# S = \sum_{e(\Psi)=\nu-e(\Phi_{\cp})} \#S_\Psi =\sum_{e(\Psi)=\nu -e(\Phi_{\cp})} \mbox{ dim} (\mathscr{S}_k^+(\mathfrak{p}^{e(\Phi_{\cp})}{\mathcal{N}_0},\Psi^2\Phi)^{\overline{\Psi}}). $
\end{enumerate}

Note that (2) is equivalent to the statement that all the elements $\f_{\overline{\Psi}}$ of $S$ are distinct. 

We show that the elements of $S$ are linearly independent by assuming the contrary and obtaining a contradiction. Suppose that there is a nontrivial relation \begin{equation}\label{equation:relation}\sum_{i=1}^m c_i \textbf{h}_i=0\end{equation} where $\textbf{h}_i\in S$ (for all $i$), the $\textbf{h}_i$ are all distinct, and each $c_i$ is a non-zero scalar. Also assume that $m\geq 2$ is minimal in the sense that the elements of any subset of $S$ having fewer than $m$ elements are linearly independent.  

For a prime $\q$ which does not divide $\cn$, we can apply the linear operator $T_{\q}-C(\q,\textbf{h}_1) \mbox{Id}$ to equation \ref{equation:relation} to get

$$\sum_{i=1}^m c_i(C(\mathfrak{q},\textbf{h}_i)-C(\mathfrak{q},\textbf{h}_1)) \textbf{h}_i.$$

Note that the coefficient of $\textbf{h}_1$ is zero in the above sum. This means that the sum has fewer than $m$ summands and hence must be trivial by the minimality of $m$. As each $c_i$ is non-zero, we conclude that $C(\q,\textbf{h}_i)=C(\q,\textbf{h}_j)$ for all $1\leq i,j\leq m$ and $\q\nmid\cn$. As only finitely many primes divide $\cn$, Theorem 3.5 of \cite{shemanske-walling} shows that $\textbf{h}_1=\textbf{h}_2=\cdots=\textbf{h}_m$. This contradicts our assumption that the $\textbf{h}_i$ are distinct, proving that the elements of $S$ are linearly independent.

To prove that $$\# S = \sum_{e(\Psi)=\nu-e(\Phi_{\cp})} \#S_\Psi =\sum_{e(\Psi)=\nu -e(\Phi_{\cp})} \mbox{ dim} (\mathscr{S}_k^+(\mathfrak{p}^{e(\Phi_{\cp})}{\mathcal{N}_0},\Psi^2\Phi)^{\overline{\Psi}}),$$ it suffices to show if $\f\in \mathscr{S}_k^+(\mathfrak{p}^{e(\Phi_{\cp})}{\mathcal{N}_0},\Psi_0^2\Phi)$ and $\g\in \mathscr{S}_k^+(\mathfrak{p}^{e(\Phi_{\cp})}{\mathcal{N}_0},\Psi_1^2\Phi)$ are normalized newforms (with $\Psi_0,\Psi_1$ Hecke characters satisfying $e(\Psi_0)=e(\Psi_1)=\nu-e(\Phi_{\cp})$) such that $\f_{\overline{\Psi}_0}=\g_{\overline{\Psi}_1}$ then $\Psi_0=\Psi_1$ and $\f=\g$.

Suppose that $\f,\g$ are as in the previous paragraph and $\f_{\overline{\Psi}_0}=\g_{\overline{\Psi}_1}$. If $\Psi_0=\Psi_1$ then Theorem 3.5 of \cite{shemanske-walling} shows that $\f=\g$. Consequently, we may assume that $\Psi_0\neq \Psi_1$. Then $$\textbf{f}\mid A_{\p}=\textbf{f}_{\overline{\Psi}_0\Psi_0}=\textbf{g}_{\overline{\Psi}_1\Psi_0}.$$ Observe that $e(\Phi_{\cp}\Psi_1^2)=e(\Phi_{\cp})$ (as $e(\Phi_{\cp})>e(\Psi_1)$) and $0<e(\overline{\Psi}_1\Psi_0)\leq \mbox{ max} \{e(\Psi_1),e(\Psi_0)\}<\frac{\nu}{2}<e(\Phi_{\cp})$ by hypothesis. By Corollary 6.4 of \cite{shemanske-walling}, $\g_{\overline{\Psi}_1\Psi_0}\in \mathscr{S}_k^+(\p^{e(\Phi_{\cp})+e(\overline{\Psi}_1\Psi_0)}{\mathcal{N}_0},\Psi_0^2\Phi)$ is a normalized newform. As $\f\sim \f\mid A_{\p}$ and $\f\mid A_{\p}=\g_{\overline{\Psi}_1\Psi_0}$ we  must have $\f=\g_{\overline{\Psi}_1\Psi_0}$ (by Theorem 3.5 of \cite{shemanske-walling}). This means that $\f=\f\mid A_{\p}$. In particular, the $\p$-th coefficient of $\f$ is zero, contradicting Theorem \ref{theorem:threethree}(2) and finishing the proof.
\end{proof}

We conclude by presenting an application of the preceding theorems. This application makes clear the centrality of determining the vanishing of the $\p$-th `Fourier' coefficient of a Hilbert modular form in the study of character twists. This is a Hilbert modular analogue of Theorem 3.16 of \cite{HPS}.

Before stating the theorem however, we need a definition.

\begin{definition}
A newform $\g\in\mathscr{S}_k(\cn,\Phi)$ is said to be $\p$-primitive if $\g$ is not the twist of any newform of level $\cn^\prime$ where $\cn^\prime$ is a proper divisor of $\cn$ by a Hecke character by a Hecke character whose conductor is a power of $\p$.
\end{definition}

\begin{theorem}
Let $\f\in\mathscr{S}_k^+(\cn,\Phi)$ be a normalized newform. The following are equivalent:

\begin{enumerate}
\item $C(\p,\f)=0$
\item $\p^2\mid \cn$ and $e(\Phi_{\cp})<\nu$
\item $\f=\g_{\Psi}$ for some newform $\g$ in $\mathscr{S}_k^+(\cn^\prime,\Phi\overline{\Psi}^2)$ for some ideal $\cn^\prime$ dividing $\cn$ and some $\p$-primary Hecke character $\Psi$.
\end{enumerate}

Further, assuming (1), if $e(\Phi_{\cp})>\frac{\nu}{2}$ then in (3) $\g$ may be chosen so that $ord_{\p}(\cn^\prime)<ord_{\p}(\cn)$ and $\g$ is $\p$-primitive.
\end{theorem}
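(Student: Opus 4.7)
The plan is to handle $(1)\Leftrightarrow(2)$ and $(3)\Rightarrow(1)$ by short appeals to Theorem \ref{theorem:threethree} and Proposition \ref{proposition:crudebound}, and to split the substantive direction $(1)\Rightarrow(3)$ into two cases by the size of $e(\Phi_\cp)$. The case $\frac{\nu}{2}<e(\Phi_\cp)<\nu$ will simultaneously deliver the addendum via Theorem \ref{theorem:primitivesum}, while the complementary range must be handled by external input from \cite{shemanske-walling}.

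For $(1)\Leftrightarrow(2)$, the implication $(2)\Rightarrow(1)$ is Theorem \ref{theorem:threethree}(3) on the nose. Conversely, if $C(\p,\f)=0$, then part (2) of that theorem forces $e(\Phi_\cp)<\nu$ (else $|C(\p,\f)|=N(\p)^{(k_0-1)/2}\neq 0$), and then part (3) rules out $\p^2\nmid\cn$, since in that sub-case $|C(\p,\f)|^2=N(\p)^{k_0-2}\neq 0$. For $(3)\Rightarrow(1)$, where $\Psi$ is understood to be nontrivial (else the statement is tautological), Proposition \ref{proposition:crudebound} gives $C(\p,\f)=\Psi^*(\p)C(\p,\g)$, and any nontrivial $\p$-primary $\Psi$ has $\p\mid\mathfrak{f}_\Psi$, so $\Psi^*(\p)=0$.

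For $(1)\Rightarrow(3)$ when $\frac{\nu}{2}<e(\Phi_\cp)<\nu$, which is exactly the hypothesis of the addendum, I would invoke Theorem \ref{theorem:primitivesum}: each summand of the decomposition is $T_\mathfrak{q}$-stable for $\mathfrak{q}\nmid\cn$, so the simultaneous eigenform $\f$ must lie in a single summand. This yields $\f=\g_{\overline{\Psi}}$ for a unique normalized newform $\g\in\mathscr{S}_k^+(\p^{e(\Phi_\cp)}\cn_0,\Psi^2\Phi)$ with $e(\Psi)=\nu-e(\Phi_\cp)>0$; setting $\Psi^\prime=\overline{\Psi}$ and $\cn^\prime=\p^{e(\Phi_\cp)}\cn_0$ delivers (3) with $ord_\p(\cn^\prime)<ord_\p(\cn)$. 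For the $\p$-primitivity of $\g$, I would use that $e(\Psi^2\Phi_\cp)=e(\Phi_\cp)$ in this range (since $e(\Psi^2)\leq e(\Psi)<e(\Phi_\cp)$), so $\g$'s character has $\cp$-exponent equal to the $\p$-power in $\g$'s level; Theorem \ref{theorem:threethree}(2) then forces $C(\p,\g)\neq 0$, precluding any further nontrivial $\p$-primary descent, and a trivial descent would violate $\g$'s status as a newform at its own level.

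For the remaining range $0\leq e(\Phi_\cp)\leq\frac{\nu}{2}$, Theorem \ref{theorem:primitivesum} no longer applies and I would fall back on Theorem 6.8 of \cite{shemanske-walling}, the same structural result already used in the proof of Theorem \ref{theorem:primitivesum}: from $C(\p,\f)=0$ it produces a $\p$-primary $\Psi$ and a newform $\g$ of strictly smaller $\p$-power level with $\f=\g_\Psi$, after which Proposition \ref{proposition:crudebound} pins down the character of $\g$ as $\Phi\overline{\Psi}^2$. The main obstacle I anticipate is the $\p$-primitivity check in the addendum's range: while the uniqueness of $\g$ follows directly from the directness of the sum in Theorem \ref{theorem:primitivesum}, certifying that no further $\p$-primary twist is possible relies on the careful comparison between $e(\Psi)$, $e(\Phi_\cp)$, and $\nu$ described above, and it is here that the strict inequalities in the addendum's hypothesis are genuinely needed.
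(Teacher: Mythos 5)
Your treatment of $(1)\Leftrightarrow(2)$, of $(3)\Rightarrow(1)$, and of the addendum (via Theorem \ref{theorem:primitivesum} together with the observation that $e(\Psi^2\Phi_{\cp})=e(\Phi_{\cp})=ord_{\p}(\p^{e(\Phi_{\cp})}\cn_0)$ forces $C(\p,\g)\neq 0$) matches the paper. The genuine gap is in $(1)\Rightarrow(3)$ on the range $e(\Phi_{\cp})\leq\frac{\nu}{2}$. You assert that Theorem 6.8 of \cite{shemanske-walling} produces, from $C(\p,\f)=0$ alone, a $\p$-primary $\Psi$ and a newform $\g$ of \emph{strictly smaller} $\p$-power level with $\f=\g_{\Psi}$. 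That is not a safe appeal and the claimed output is false in general: the paper invokes Theorem 6.8 only under the hypothesis $\frac{\nu}{2}<e(\Phi_{\cp})<\nu$ (in the proof of Theorem \ref{theorem:primitivesum}), and if every newform with vanishing $\p$-th coefficient descended to strictly smaller level, the addendum's restriction $e(\Phi_{\cp})>\frac{\nu}{2}$ would be superfluous --- $\p$-primitive newforms with $C(\p,\f)=0$ exist precisely in the complementary range.

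The fix, and the paper's actual argument, is to note that statement (3) allows $\cn^\prime=\cn$, so no descent in level is required there. The paper proves $(2)\Rightarrow(3)$ by splitting on whether $\Phi_{\cp}$ is trivial rather than on the size of $e(\Phi_{\cp})$: if $\Phi_{\cp}$ is nontrivial, Theorem \ref{theorem:innertwist} gives $\mathscr{S}_k^+(\cn,\Phi)=\mathscr{S}_k^+(\cn,\overline{\Phi}_{\cp}\Phi_{\cn_0})^{\Phi_{\cp}}$, so $\f=\g_{\Phi_{\cp}}$ for a newform $\g$ at the \emph{same} level with character $\Phi\overline{\Phi}_{\cp}^{2}$; if $\Phi_{\cp}$ is trivial, one chooses a $\p$-primary $\Psi$ with $0<e(\Psi)<\frac{\nu}{2}$ and applies Theorem \ref{theorem:twocharacters} to write $\f=\g_{\Psi}$ with $\g\in\mathscr{S}_k^+(\cn,\overline{\Psi}^2\Phi)$. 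These two decomposition theorems, which your proposal never uses, are exactly what covers the range where Theorem \ref{theorem:primitivesum} does not apply; you should replace the appeal to Theorem 6.8 with them.
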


\begin{proof}
(1) implies (2) follows immediately from Theorem \ref{theorem:threethree}. Now assume (2) holds. We have two cases to consider. If $\Phi_{\cp}$ is trivial then let $\Psi$ be a $\p$-primary Hecke character with $0<e(\Psi)<\frac{\nu}{2}$. Theorem \ref{theorem:twocharacters} shows that $\mathscr{S}_k^+(\cn,\overline{\Psi}^2\Phi_{\cn_0})^{\Psi}=\mathscr{S}_k^+(\cn,\Phi_{\cn_0})$ and that there exists a newform $\g\in \mathscr{S}_k^+(\cn,\overline{\Psi}^2\Phi_{\cn_0})$ such that $\f=\g_{\Psi}$. Now suppose that $\Phi_{\cp}$ is nontrivial. Then Theorem \ref{theorem:innertwist} shows that there exists a newform $\g\in\mathscr{S}_k^+(\cn,\overline{\Phi}_{\cp}\Phi_{\cn_0})$ such that $\f=\g_{\Phi_{\cp}}$. We therefore take $\cn^\prime=\cn$ and $\Psi=\Phi_{\cp}$. Finally, assume (3) holds. Then $C(\p,\f)=C(\p,\g_{\Psi})=\Psi^*(\p)C(\p,\g)=0$ by Proposition \ref{proposition:crudebound}.

For the final assertion, note that $\frac{\nu}{2}<e(\Phi_{\cp})<\nu$ implies, by Theorem \ref{theorem:primitivesum}, that there exists a newform $\g\in\mathscr{S}_k^+(\p^{e(\Phi_{\cp})}\cn_0,\Psi^2\Phi)$ such that $\f=\g_{\overline{\Psi}}$, where $\Psi$ is a $\p$-primary Hecke character with $e(\Psi)=\nu-e(\Phi_{\cp})$. We show that such a $\g$ is $\p$-primitive. It clearly suffices to show that $C(\p,\g)\neq 0$, which follows from Theorem \ref{theorem:threethree} as $e(\Psi^2\Phi_{\cp})=e(\Phi_{\cp})=ord_{\p}(\p^{e(\Phi_{\cp})}\cn_0)$.\end{proof}

 %%%%%%%%%%%%%%%%%%%%
 %%%%%%%%%%%%%%%%%%%%
 %%%%%%%%%%%%%%%%%%%%
 %%%%%%%%%%%%%%%%%%%%
 %%%%%%%%%%%%%%%%%%%%
 %%%%%%%%%%%%%%%%%%%%
 %%%%%%%%%%%%%%%%%%%%

\end{document}